\theoremstyle{thmstyleone}%
\newtheorem{theorem}{Theorem}
\newtheorem{Definition}[theorem]{Definition}
\newtheorem{proposition}[theorem]{Proposition}%
\theoremstyle{thmstyletwo}%
\newtheorem{lemma}{Lemma}%
\theoremstyle{thmstylethree}%
\begin{document}


\title[Article Title]{Low-rank generalized alternating direction implicit iteration method for solving matrix equations}





\author*[1]{\fnm{Juan} \sur{Zhang}}
\email{zhangjuan@xtu.edu.cn}
\author[2]{\fnm{Wenlu} \sur{Xun}}

\affil[1]{Key Laboratory of Intelligent Computing and Information Processing
of Ministry of Education, Hunan Key Laboratory for Computation and Simulation in Science and
Engineering, School of Mathematics and Computational Science, Xiangtan University, Xiangtan,
Hunan, China}

\affil[2]{School of Mathematics and Computational Science, Xiangtan University, Xiangtan, Hunan,
China}

\abstract{This paper presents an effective low-rank generalized alternating direction implicit iteration (R-GADI) method for solving large-scale sparse and stable Lyapunov matrix equations and continuous-time algebraic Riccati matrix equations. The method is based on generalized alternating direction implicit iteration (GADI), which exploits the low-rank property of matrices and utilizes the Cholesky factorization approach for solving. The advantage of the new algorithm lies in its direct and efficient low-rank formulation, which is a variant of the Cholesky decomposition in the Lyapunov GADI method, saving storage space and making it computationally effective. When solving the continuous-time algebraic Riccati matrix equation, the Riccati equation is first simplified to a Lyapunov equation using the Newton method, and then the R-GADI method is employed for computation. Additionally, we analyze the convergence of the R-GADI method and prove its consistency with the convergence of the GADI method. Finally, the effectiveness of the new algorithm is demonstrated through corresponding numerical experiments.}

\keywords{Lyapunov equation, Continuous-time algebraic Riccati equation, Low-rank generalized alternating direction implicit iteration}



\maketitle

\section{Introduction}\label{sec1}

This paper focuses on the numerical solution of large-scale continuous-time algebraic Riccati matrix equations (CARE):
\begin{equation}\label{eq5}
 A^{T}X+XA+Q-XGX=0,
\end{equation}
where $Q=C^{T}C$ is symmetric and positive definite, $G=BB^{T}$ is symmetric and positive semi-definite, and $A\in\mathbb R^{n\times n},\ B\in\mathbb R^{n\times m}, \ C\in\mathbb R^{p\times n}$ are known matrices, and $X\in\mathbb R^{n\times n}$ is unknown matrix. Here, rank$(C)$=$p$, rank$(B)$=$m$ and $p,\ m\ll n$.
The numerical treatment of this type of equation plays a significant role in various fields. For instance, linear quadratic regulators \cite{ref1}, linear model reduction systems based on equilibrium \cite{ref2}, parabolic partial differential equations and transport theory \cite{ref3,ref4}, Wiener-Hopf factorization of Markov chains \cite{ref5}, and factorization of rational matrix functions \cite{ref6}, etc. In this paper, we assume that the coefficient matrix $A$ is sparse. Typically, the stable solution to equation \eqref{eq5} is desired, where the solution $X$ is symmetric and positive semi-definite, and $A-GX$ is stable, meaning its eigenvalues have negative real parts. Such stable solutions exist and under certain assumptions, it is unique \cite{ref7}. When $n$ is large, it is common to seek low-rank approximations of the symmetric positive semi-definite solution in the form of $ZZ^{T}\approx X$, where the column rank of $Z$ is low, i.e., rank$(Z)\ll n$. Since storing the full matrix $X$ requires a significant amount of memory, considering only the storage of the matrix $Z$ allows us to optimize resource utilization when dealing with large-scale problems.

Firstly, we present an application of equation \eqref{eq5} by considering a linear time-invariant control system\cite{ref25}:
\begin{equation*}
\left\{
\begin{array}{ll}
\dot{x}(t)=Ax(t)+Bu(t),\ \ \ x(0)=x_{0},\\
y(t)=Cx(t),
\end{array}
\right.
\end{equation*}
where $x(t)\in\mathbb R^{n}$ is the state vector, $u(t)\in\mathbb R^{m}$ is the control vector, and $y(t)\in\mathbb R^{p}$ is the output vector. Quadratic optimal control aims to minimize
$$J(x_{0},u)=\frac{1}{2}\int_{0}^{+\infty}(y(t)^{T}y(t)+u(t)^{T}u(t))dt.$$
Assuming that $(A,B)$ is stabilizable, i.e., there exists a matrix $S$ such that $A-BS$ is stable. And $(C,A)$ is detectable, i.e., $(A^{T},C^{T})$ is stable, there exists a unique optimal solution $\bar{u}$ that minimizes the functional $J(x_{0},u)$\cite{ref8}, which can be determined through the feedback operator $P$, i.e., $\bar{u}(t)=Px(t)$, where $P=B^{T}X$, and $X\in\mathbb R^{n\times n}$ is the unique symmetric positive semi-definite stable solution of the matrix equation \eqref{eq5}.

There are many methods have been proposed for the numerical solution of equation \eqref{eq5}, including the Schur method \cite{ref9}, matrix sign function \cite{ref2,ref10}, structured doubling algorithm \cite{ref11}, symplectic Lanczos method \cite{ref12}, and projection methods based on the global Arnoldi process of the Krylov subspace \cite{ref13,ref14}. However, these methods often require multiple iterations to obtain an accurate approximate solution, leading to significant increases in computational time and memory requirements.\ To address this issue, some scholars have also investigated approximate low-rank solutions for computing large sparse matrix equations. Typically, we combine the Newton's iteration method with the alternating direction implicit (ADI) algorithm to solve such equations, and take advantage of the quadratic local convergence properties of Newton's method. However, at each Newton iteration step, solving a large Lyapunov matrix equation is required to obtain the next iteration solution. The continuous-time Lyapunov matrix equation \cite{refL.A.} as follows:
\begin{equation}\label{eq01}
 F^{T}X+XF=Q,
\end{equation}
where $Q=C^{T}C$, $F\in\mathbb R^{n\times n},\ C\in\mathbb R^{p\times n}$. From equation \eqref{eq5}, we observe that equation \eqref{eq01} is a specific case of equation \eqref{eq5} when $G=0$. If the spectrum of 
matrix $F$ is in the positive-real half-plane. Under these conditions $F$ and $-F^{T}$  have no common 
characteristic roots \cite{refRutherford}, and there is a unique symmetric solution X.

Benner et al. \cite{ref15} proposed the low-rank Kleinman-Newton ADI iteration method to solve equation \eqref{eq5}. In the computational process, the Lyapunov matrix equation is solved using a low-rank Cholesky factorization. This method is based on solving linear systems with shifted matrices $A+\alpha I$, where $\alpha$ is the ADI parameter. However, determining the optimal ADI parameter and finding approximations for the Lyapunov equation increase the burden on memory requirements and computational time. Recently, Wong and Balakrishnan \cite{ref16} introduced an algorithm called Quadratic ADI (qADI) method to solve the algebraic Riccati equation \eqref{eq5}. Their method is a direct extension of the Lyapunov ADI method. Additionally, Wong and Balakrishnan provided a low-level variant of this algorithm. However, this variant has a significant drawback: in every step, all low-rank factors need to be reconstructed, which greatly affects the performance of the algorithm. In addition to the qADI method, several approaches for solving large-scale Riccati equations have emerged in recent literature. For instance, Amodei and Buchot \cite{ref17} obtain approximate solutions by computing low-dimensional subspaces of the associated Hamiltonian matrix. Benner et al. \cite{ref18} propose a novel ADI iteration method called Riccati ADI (RADI), which expands each factor by several multiples of columns or rows while keeping the elements from previous steps unchanged. Their method yields low-rank Lyapunov ADI iteration formulas.

In addition, there are currently many algorithms available for solving the Lyapunov equation \eqref{eq01}. ADI iterations can significantly accelerate convergence if the optimal shifts of $A$ and $A^{T}$ can be effectively estimated. Therefore, for stable Lyapunov equations \eqref{eq01}, when solving large-scale sparse problems, ADI iteration methods are often preferred as they preserve sparsity and are more amenable to parallelization in most cases. Recent theoretical results \cite{ref19,ref20,ref21} indicate that using the Cholesky factorization-alternating direction implicit (CF-ADI) algorithm to compute low-rank approximations for the Lyapunov equation is effective. Based on the \cite{ref22}, the GADI iteration for solving large-scale sparse linear systems can be described as follows:
\begin{equation}\label{eq02}
Ax=b,\ \ A\in\mathbb R^{n\times n},\ \ x,b\in\mathbb R^{n}.
\end{equation}
\begin{itemize}
\item Firstly, the matrix $A$ is split, assuming that $A$ can be represented as $A=M+N $, and then assign parameters to obtain$$\alpha x+Mx=\alpha x-Nx+b,$$
$$\alpha x+Nx=Nx-(1-\omega)\alpha x+(1-\omega)\alpha x+\alpha x=Nx-(1-\omega)\alpha x+(2-\omega)\alpha x.$$

\item Next, the algorithm is obtained by alternating between these two splittings. Given an initial $x_{0}=0$, the GADI iteration computes a sequence ${x_{k}}$ as follows
\begin{equation}\label{eq03}
\left\{
\begin{array}{ll}
(\alpha I+M)x_{k+\frac{1}{2}}=(\alpha I-N)x_{k}+b,\\
(\alpha I+N)x_{k+1}=(N-(1-\omega)\alpha I)x_{k}+(2-\omega)\alpha x_{k+\frac{1}{2}},
\end{array}
\right.
\end{equation}
with $\alpha>0,\ \ 0\leq\omega<2$.

\item Specifically,  the matrix $A$ is split into $A=H+S$, where $H=\frac{A+A^{\ast}}{2}$ is a Hermiten matrix, and $S=\frac{A-A^{\ast}}{2}$ is a skew-Hermitian matrix, then GADI-HS format is obtained
\begin{equation}\label{eq04}
\left\{
\begin{array}{ll}
(\alpha I+H)x_{k+\frac{1}{2}}=(\alpha I-S)x_{k}+b,\\
(\alpha I+S)x_{k+1}=(S-(1-\omega)\alpha I)x_{k}+(2-\omega)\alpha x_{k+\frac{1}{2}}.
\end{array}
\right.
\end{equation}
\end{itemize}

In this paper, we propose a low-rank generalized alternating direction implicit iteration (R-GADI) algorithm, which is an improvement over the GADI algorithm for solving the Lyapunov equation. We represent the solution as a low-rank approximation $X\approx VW^{T}$, where rank$(V)$ and rank$(W)\ll n$. During the computation, the R-GADI method provides a low-rank approximation of the solution $X$, eliminating the need to store $X$ at each iteration and reducing storage requirements. Additionally, we combine the Kleinman-Newton method with R-GADI (referred to as Kleinman-Newton-RGADI) to solve the Riccati equation \eqref{eq5}.This method is a variant of the Newton-GADI algorithm \cite{ref23}, which significantly reduces the total number of ADI iterations and thus lowers the overall computational cost. Finally, numerical examples in the paper demonstrate the effectiveness of the proposed algorithm.

The remaining structure of this paper is as follows: in section \ref{sec:solve Lyapunov equation}, we introduce the R-GADI iteration format for solving the Lyapunov equation and demonstrates the consistency between R-GADI and GADI iterations in terms of convergence. The selection of parameters, algorithm complexity, and comparison with other methods are discussed, along with relevant numerical examples. In section \ref{sec:solve Riccati equation}, we first transform the Riccati equation into the Lyapunov equation using the Kleinman-Newton method, and then present the R-GADI iteration format. Convergence, algorithm complexity, and additional numerical examples are also discussed to validate the effectiveness of the proposed algorithm. Finally, in section \ref{sec:conclusion} concludes the paper by summarizing the findings and offering some concluding remarks.

In this article, we use the following notation: $\mathbb R^{n\times m}$ denotes the set of all $n\times m$ real matrices. If $A\in\mathbb R^{n\times n}$, then $A^{T}$ and $A^{-1}$ represent the transposition and inverse of $A$, respectively. The sets of eigenvalues and singular values of $A$ are denoted as $\Lambda(A)=\{\lambda_{i}(A),\ i=1,2,\cdots,n\}$ and $\Sigma(A)=\{\sigma_ {i}(A),\ i=1,2,\cdots,n\}$, where $\lambda_{i}(A)$ and $\sigma_ {i}(A)$ are the $i$-th eigenvalue and the $i$-th singular value of $A$, respectively. $\rho(A)=\max_{1\leq i\leq n}\{|\lambda_{i}(A)|\}$ represents the spectral radius of $A$. $A>0\ (A\geq0)$ indicates that $A$ is positive definite (positive semidefinite), $\|A\|_{2}$ denotes the 2-norm of $A$, Re$(A)$ and Im$(A)$ represent the real and imaginary parts of the eigenvalues of $A$, respectively. $A\otimes I$ denotes the Kronecker product of  $A$ and $I$. 
\begin{Definition}\label{definition1}
Let $A_{1}=[a_{ij}]\in\mathbb C^{m\times n},\ \ B_{1}\in\mathbb C^{p\times q}$, then
 \begin{equation*}
      A_{1}\otimes B_{1}=\left(
                                                                                                                 \begin{array}{cccc}
                                                                                                                   a_{11}B_{1} & a_{12}B_{1} & \cdots & a_{1n}B_{1} \\
                                                                                                                   a_{21}B_{1} & a_{22}B_{1} & \cdots & a_{2n}B_{1} \\
                                                                                                                   \vdots & \vdots &  & \vdots \\
                                                                                                                   a_{m1}B_{1} & a_{m2}B_{1} & \cdots & a_{mn}B_{1} \\
                                                                                                                 \end{array}
                                                                                                               \right)\in\mathbb C^{mp\times nq},
\end{equation*}
it's called Kronecker product of $A_ {1}$ and $B_{1}$.
\end{Definition}
\begin{Definition}\label{definition2}
If the vectorization operator vec satisfies $\mathbb C^{m\times n}\rightarrow\mathbb C^{mn}$:
$$vec(X_{1})=(x_{1}^{T},x_{2}^{T},\cdots,x_{n}^{T})^{T},\ \ X_{1}=[x_{1},x_{2},\cdots,x_{n}]\in\mathbb C^{m\times n},$$
then this operator is called a straightening operator.
\end{Definition}
\section{Low rank GADI for solving Lyapunov equation}
\label{sec:solve Lyapunov equation}
\subsection{Derivation of iterative format}
\qquad Firstly, we consider the ADI iterative method for solving the Lyapunov equation \eqref{eq01} with a single parameter.
\begin{equation}\label{eq06}
\left\{
\begin{array}{ll}
(F^{T}+\alpha I)X_{k+\frac{1}{2}}=Q-X_{k}(F-\alpha I),\\
X_{k+1}(F+\alpha I)=Q-(F^{T}-\alpha I)X_{k+\frac{1}{2}}.
\end{array}
\right.
\end{equation}
By \eqref{eq06}, we can obtain iterative format for $X_{k+1}$, $$X_{k+1}=(F^{T}-\alpha I)(F^{T}+\alpha I)^{-1}X_{k}(F-\alpha I)(F+\alpha I)^{-1}+2\alpha(F^{T}+\alpha I)^{-1}Q(F+\alpha I)^{-1}.$$

Since $F-\alpha I$ is interchangeable with $(F+\alpha I)^{-1}$, we can derive a low-rank ADI (R1-ADI) iterative formula with a single parameter.
\begin{equation}\label{eq07}
\left\{
\begin{array}{lllll}
V_{1}=\sqrt{2\alpha}(F^{T}+\alpha I)^{-1}C^{T} ,\ \ V_{1}\in\mathbb R^{n\times p},\\
V_{k}=[(F^{T}-\alpha I)(F^{T}+\alpha I)^{-1}V_{k-1},V_{1}] ,\ \ V_{k}\in\mathbb R^{n\times kp},\\
X_{k}=V_{k}V_{k}^{T},\ \ X_{k}\in R^{n\times n}.
\end{array}
\right.
\end{equation}

Next, we consider the ADI iterative method with two parameters for solving the Lyapunov equation \eqref{eq01}.
\begin{equation}\label{eq08}
\left\{
\begin{array}{ll}
(F^{T}+\alpha I)X_{k+\frac{1}{2}}=Q-X_{k}(F-\alpha I),\\
X_{k+1}(F+\beta I)=Q-(F^{T}-\beta I)X_{k+\frac{1}{2}}.
\end{array}
\right.
\end{equation}
Similarly, we can obtain a low-rank ADI (R2-ADI) iterative formula with two parameters.
\begin{equation}\label{eq09}
\left\{
\begin{array}{lllll}
V_{1}=\sqrt{\alpha+\beta}(F^{T}+\alpha I)^{-1}C^{T} ,\ \ V_{1}\in\mathbb R^{n\times p},\\
V_{k}=[(F^{T}-\beta I)(F^{T}+\alpha I)^{-1}V_{k-1},V_{1}] ,\ \ V_{k}\in\mathbb R^{n\times kp},\\
W_{1}=\sqrt{\alpha+\beta}(F^{T}+\beta I)^{-1}C^{T} ,\\
W_{k}=[(F^{T}+\beta I)^{-1}(F^{T}-\alpha I)W_{k-1},W_{1}] ,\\
X_{k}=V_{k}W_{k}^{T},\ \ X_{k}\in\mathbb R^{n\times n}.
\end{array}
\right.
\end{equation}

We apply the GADI iterative framework to solve the Lyapunov equation \eqref{eq01}. Firstly, the straightening operator is applied and from the Kronecker product, we have
\begin{equation}\label{eq10}
(F^{T}\otimes I+I\otimes F^{T})x=q,\ \ x=\text{vec}(X),\ \ q=\text{vec}(Q).
\end{equation}

Secondly, by applying the GADI iterative method in equation \eqref{eq10}, we obtain the following expression.
\begin{equation}\label{eq11}
\left\{
\begin{array}{ll}
(\alpha I_{n^{2}}+I\otimes F^{T})x_{k+\frac{1}{2}}=(\alpha I_{n^{2}}-F^{T}\otimes I)x_{k}+q,\\
(\alpha I_{n^{2}}+F^{T}\otimes I)x_{k+1}=(F^{T}\otimes I-(1-\omega)\alpha I_{n^{2}})x_{k}+(2-\omega)\alpha x_{k+\frac{1}{2}}.
\end{array}
\right.
\end{equation}
 We rewrite equation \eqref{eq11} into matrix form as
\begin{equation}\label{eq12}
\left\{
\begin{array}{ll}
(\alpha I+F^{T})X_{k+\frac{1}{2}}=X_{k}(\alpha I-F)+Q,\\
X_{k+1}(\alpha I+F)=X_{k}(F-(1-\omega)\alpha I)+(2-\omega)\alpha X_{k+\frac{1}{2}}.
\end{array}
\right.
\end{equation}
We select the appropriate parameter $\alpha$ to ensure that both matrices $\alpha I+F^{T}$ and $\alpha I+F$ are invertible. From the first equation of \eqref{eq12}, we can obtain $$X_{k+\frac{1}{2}}=(\alpha I+F^{T})^{-1}X_{k}(\alpha I-F)+(\alpha I+F^{T})^{-1}C^{T}C,$$  we substitute it into the second equation, then
\begin{equation*}
\begin{aligned}
X_{k+1}=&X_{k}(F-(1-\omega)\alpha I)(\alpha I+F)^{-1}\\
+&(2-\omega)\alpha X_{k+\frac{1}{2}}(\alpha I+F)^{-1}\\
=&X_{k}(F-(1-\omega)\alpha I)(\alpha I+F)^{-1}\\
+&(2-\omega)\alpha[(\alpha I+F^{T})^{-1}X_{k}(\alpha I-F)\\
+&(\alpha I+F^{T})^{-1}C^{T}C](\alpha I+F)^{-1}\\
=&X_{k}(F-(1-\omega)\alpha I)(\alpha I+F)^{-1}\\
+&(2-\omega)\alpha(\alpha I+F^{T})^{-1}X_{k}(\alpha I-F)(\alpha I+F)^{-1}\\
+&(2-\omega)\alpha(\alpha I+F^{T})^{-1}C^{T}C(\alpha I+F)^{-1}.
\end{aligned}
\end{equation*}
Taking the initial value $X_ {0}=0$, we have $X_{1}=(2-\omega)\alpha(\alpha I+F^{T})^{-1}C^{T}C(\alpha I+F)^{-1}$.

Let$$V_{1}=W_{1}=\sqrt{(2-\omega)\alpha}(\alpha I+F^{T})^{-1}C^{T},$$
where $X_{1}=V_{1}W_{1}^{T}$. We can also get
\begin{align*}
X_{2}&=X_{1}(F-(1-\omega)\alpha I)(\alpha I+F)^{-1}\\
&+(2-\omega)\alpha(\alpha I+F^{T})^{-1}X_{1}(\alpha I-F)(\alpha I+F)^{-1}\\
&+(2-\omega)\alpha(\alpha I+F^{T})^{-1}C^{T}C(\alpha I+F)^{-1}\\
&=V_{1}W_{1}^{T}(F-(1-\omega)\alpha I)(\alpha I+F)^{-1}\\
&+(2-\omega)\alpha(\alpha I+F^{T})^{-1}V_{1}W_{1}^{T}(\alpha I-F)(\alpha I+F)^{-1}+V_{1}W_{1}^{T}.
\end{align*}
Let$$V_{2}=[V_{1},\sqrt{(2-\omega)\alpha}(\alpha I+F^{T})^{-1}V_{1},V_{1}],$$
$$W_{2}=[(\alpha I+F^{T})^{-1}(F^{T}-(1-\omega)\alpha I)W_{1},\sqrt{(2-\omega)\alpha}(\alpha I+F^{T})^{-1}(\alpha I-F^{T})W_{1},W_{1}],$$where $X_{2}=V_{2}W_{2}^{T}$.

Based on the previous derivation method, we can obtain the R-GADI iterative format for soliving the Lyapunov equation \eqref{eq01}.
\begin{equation}\label{eq13}
\left\{
\begin{array}{lllll}
V_{1}=\sqrt{(2-\omega)\alpha}(\alpha I+F^{T})^{-1}C^{T} ,\\
V_{k}=[V_{k-1},\sqrt{(2-\omega)\alpha}(\alpha I+F^{T})^{-1}V_{k-1},V_{1}],\ \ V_{k}\in R^{n\times(2^{k}-1)p},\\
W_{1}=\sqrt{(2-\omega)\alpha}(\alpha I+F^{T})^{-1}C^{T} ,\\
W_{k}=[(\alpha I+F^{T})^{-1}(F^{T}-(1-\omega)\alpha I)W_{k-1},\sqrt{(2-\omega)\alpha}(\alpha I+F^{T})^{-1}(\alpha I-F^{T})W_{k-1},W_{1}] ,\\
X_{k}=V_{k}W_{k}^{T}.
\end{array}
\right.
\end{equation}
Next, the R-GADI algorithm is given as Algorithm \ref{algorithm1}:
\floatname{algorithm}{Algorithm}
\renewcommand{\algorithmicrequire}{\textbf{Input:}}
\renewcommand{\algorithmicensure}{\textbf{Output:}}
\begin{algorithm}
\caption{R-GADI iteration for solving Lyapunov equation \eqref{eq01}}
  \label{algorithm1}
  \begin{algorithmic}[1]
    \REQUIRE Matrix $F,\ C$, parameters $\alpha$ and $\omega$,\ $k_{max}$ and residual limit $\varepsilon$;
    \ENSURE Approximation $X\approx VW^{T}$ for the solution of the Lyapunov equation $F^{T} X+XF=Q$.
    \STATE $X_{0}=0;\ Q=C^{T}C$;
    \FOR {$k=1,\cdots,k_{max}$}
    \IF {$k=1$}
    \STATE Solve $\frac{1}{\sqrt{(2-\omega)\alpha}}(F^{T}+\alpha I)V_{1}=C^{T}$ for $V_{1};$
    \STATE $W_{1}=V_{1};$
    \ELSE
    \STATE $V_{k}=[V_{k-1},\ \sqrt{(2-\omega)\alpha}(F^{T}+\alpha I)^{-1}V_{k-1},\ \sqrt{(2-\omega)\alpha}(F^{T}+\alpha I)^{-1}C^{T}];$
    \STATE $W_{k}=[(\alpha I+F^{T})^{-1}(F^{T}-(1-\omega)\alpha I)W_{k-1},\ \sqrt{(2-\omega)\alpha}(\alpha I+F^{T})^{-1}(\alpha I-F^{T})W_{k-1},$
            \quad $\sqrt{(2-\omega)\alpha}(F^{T}+\alpha I)^{-1}C^{T}];$
    \ENDIF
    \STATE $X_{k}=V_{k}W_{k}^{T};$
    \STATE Compute Res$ (X_{k})=\frac{\|F^{T}X_{k}+X_{k}F-Q\|_{2}}{\|Q\|_{2}};$
               \IF {Res$(X_{k})<\varepsilon$}
               \STATE  stop;
               \ENDIF
    \ENDFOR
  \end{algorithmic}
\end{algorithm}
\subsection{Convergence analysis}
Some simple properties of Kronecker product can be easily derived from the Definition \ref{definition1} and Definition \ref{definition2}.
\begin{lemma}\label{lemma1}\cite{ref24}
Let $\alpha\in\mathbb C$,\ $A_{1}\in\mathbb C^{m\times n},\ \ B_{1}\in\mathbb C^{p\times q},\ \ X_{1}\in\mathbb C^{n\times p},\ \ C_{1}\in\mathbb C^{m\times n},\ \ D_{1}\in\mathbb C^{p\times q}$, then
\begin{itemize}
\item[(a)]$\alpha(A_{1}\otimes B_{1})=(\alpha A_{1})\otimes B_{1}=A_{1}\otimes(\alpha B_{1});$\\

\item[(b)]$(A_{1}\otimes B_{1})(C_{1}\otimes D_{1})=(A_{1}C_{1})\otimes(B_{1}D_{1});$\\

\item[(c)]$(A_{1}\otimes B_{1})^{T}=A_{1}^{T}\otimes B_{1}^{T};$\\

\item[(d)]$vec(A_{1}X_{1}B_{1})=(B_{1}^{T}\otimes A_{1})vec(X_{1});$\\

\item[(e)]$\Lambda(I_{p}\otimes A_{1}-B_{1}^{T}\otimes I_{n})=\{\lambda-\mu:\lambda\in\Lambda(A_{1}),\ \ \mu\in\Lambda(B_{1})\}.$
\end{itemize}
\end{lemma}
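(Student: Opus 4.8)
The plan is to verify items (a)--(d) by a direct computation with the block structure supplied by Definition~\ref{definition1} (and Definition~\ref{definition2} for the vectorization), and to derive (e) from Schur triangularization combined with (b) and (c). First I would dispose of (a) and (c), which are immediate. The $(i,j)$ block of $A_{1}\otimes B_{1}$ is $a_{ij}B_{1}$; multiplying the whole matrix by $\alpha$ multiplies every block by $\alpha$, and $\alpha(a_{ij}B_{1})=(\alpha a_{ij})B_{1}=a_{ij}(\alpha B_{1})$ exhibits the block patterns of $(\alpha A_{1})\otimes B_{1}$ and $A_{1}\otimes(\alpha B_{1})$, giving (a). For (c), transposing the block matrix sends the $(i,j)$ block to position $(j,i)$ and transposes it, producing $(a_{ij}B_{1})^{T}=a_{ij}B_{1}^{T}$ in position $(j,i)$; since the $(j,i)$ block of $A_{1}^{T}\otimes B_{1}^{T}$ is $(A_{1}^{T})_{ji}B_{1}^{T}=a_{ij}B_{1}^{T}$, the two matrices coincide.

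Next I would prove (b) and (d). For (b), assuming the sizes are conformable for the products $A_{1}C_{1}$ and $B_{1}D_{1}$, I compute the $(i,k)$ block of $(A_{1}\otimes B_{1})(C_{1}\otimes D_{1})$ by multiplying block-row $i$ of the first factor into block-column $k$ of the second:
\[
\sum_{j}(a_{ij}B_{1})(c_{jk}D_{1})=\Big(\sum_{j}a_{ij}c_{jk}\Big)B_{1}D_{1}=(A_{1}C_{1})_{ik}\,(B_{1}D_{1}),
\]
which is exactly the $(i,k)$ block of $(A_{1}C_{1})\otimes(B_{1}D_{1})$. For (d), the $k$-th column of $A_{1}X_{1}B_{1}$ is $A_{1}X_{1}(B_{1}e_{k})=\sum_{j}(B_{1})_{jk}A_{1}x_{j}$, where $x_{j}$ is the $j$-th column of $X_{1}$, so the $k$-th block of $\mathrm{vec}(A_{1}X_{1}B_{1})$ equals $\sum_{j}(B_{1})_{jk}A_{1}x_{j}$. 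On the other side, the $k$-th block of $(B_{1}^{T}\otimes A_{1})\mathrm{vec}(X_{1})$ is $\sum_{j}(B_{1}^{T})_{kj}A_{1}x_{j}=\sum_{j}(B_{1})_{jk}A_{1}x_{j}$, so the two agree. (Equivalently, one may first check $\mathrm{vec}(A_{1}X_{1})=(I\otimes A_{1})\mathrm{vec}(X_{1})$ and $\mathrm{vec}(X_{1}B_{1})=(B_{1}^{T}\otimes I)\mathrm{vec}(X_{1})$ from Definition~\ref{definition2} and compose using (b).)

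Finally, for (e) --- the one substantive item --- I would use Schur decompositions $A_{1}=U_{A}T_{A}U_{A}^{*}$ and $B_{1}^{T}=U_{B}T_{B}U_{B}^{*}$ with $U_{A},U_{B}$ unitary and $T_{A}\in\mathbb C^{n\times n}$, $T_{B}\in\mathbb C^{p\times p}$ upper triangular. By (a)--(c),
\[
I_{p}\otimes A_{1}-B_{1}^{T}\otimes I_{n}=(U_{B}\otimes U_{A})\big(I_{p}\otimes T_{A}-T_{B}\otimes I_{n}\big)(U_{B}\otimes U_{A})^{*},
\]
a unitary similarity, so it suffices to read off the spectrum of the middle factor. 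That matrix is a $p\times p$ array of $n\times n$ blocks whose $(i,j)$ block is $\delta_{ij}T_{A}-(T_{B})_{ij}I_{n}$; since $T_{B}$ is upper triangular the array is block upper triangular with $i$-th diagonal block $T_{A}-(T_{B})_{ii}I_{n}$, itself upper triangular with diagonal entries $\lambda_{k}(A_{1})-(T_{B})_{ii}$. Hence the eigenvalues are exactly $\{\lambda_{k}(A_{1})-(T_{B})_{ii}\}$, and because the $(T_{B})_{ii}$ run over $\Lambda(B_{1}^{T})=\Lambda(B_{1})$ this is $\{\lambda-\mu:\lambda\in\Lambda(A_{1}),\ \mu\in\Lambda(B_{1})\}$, as claimed.

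Since the statement is classical and is quoted here only as a tool, none of the steps is genuinely difficult; the part needing the most care is (e), where one must respect the block-triangular structure and check that it is $\Lambda(B_{1}^{T})=\Lambda(B_{1})$ that enters, with the sign arranged so that the spectrum consists of differences $\lambda-\mu$ rather than $\mu-\lambda$. The only other place a transpose could slip in is the index bookkeeping in (d), namely remembering that the right-hand factor $B_{1}$ contributes $B_{1}^{T}$ as the \emph{left} Kronecker factor.
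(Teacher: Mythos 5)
The paper states this lemma as a quoted result from \cite{ref24} and gives no proof of its own, so there is nothing internal to compare your argument against. Your proof is correct and is the standard one: direct block computations from Definition~\ref{definition1} and Definition~\ref{definition2} for (a)--(d), and Schur triangularization plus the block upper-triangular structure for (e), with the sign and the identification $\Lambda(B_{1}^{T})=\Lambda(B_{1})$ handled properly. The only cosmetic points are that (e) tacitly requires $A_{1}$ and $B_{1}$ to be square (as the notation $I_{p}\otimes A_{1}-B_{1}^{T}\otimes I_{n}$ already forces), and that the step $(U_{B}\otimes U_{A})^{*}=U_{B}^{*}\otimes U_{A}^{*}$ uses the conjugate-transpose analogue of (c) rather than (c) itself; both are immediate.
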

\begin{lemma}\label{lemma2}\cite{ref24}
Let $A\in\mathbb R^{n\times n}$,\ $\rho(A)=max\{|\lambda|:\lambda\in\Lambda(A)\}$ is the spectral radius of $A$, then for any compatible norm $\|\cdot\|$ on $\mathbb R^{n\times n}$, there is $\rho(A)\leq \|A\|$.
\end{lemma}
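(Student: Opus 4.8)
The plan is to reduce the scalar eigenvalue relation to a matrix relation on which $A$ acts as a pure scaling, and then exploit compatibility (submultiplicativity) of the norm. Concretely, I would pick an eigenvalue that attains the spectral radius and use its eigenvector to build a nonzero $n\times n$ matrix that is scaled by that eigenvalue under left multiplication by $A$.

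First I would fix $\lambda\in\Lambda(A)$ with $|\lambda|=\rho(A)$ and an associated eigenvector $x\neq 0$, so that $Ax=\lambda x$. The key step is to pass from this vector identity to a matrix identity: form $X=[\,x\ \ x\ \cdots\ x\,]$, the $n\times n$ matrix all of whose columns equal $x$. Since multiplication by $A$ acts columnwise, $AX=[\,Ax\ \cdots\ Ax\,]=[\,\lambda x\ \cdots\ \lambda x\,]=\lambda X$, and $X\neq 0$ because $x\neq 0$. Next I would invoke compatibility of the norm in the form $\|AX\|\le\|A\|\,\|X\|$; combining this with $AX=\lambda X$ and absolute homogeneity gives $|\lambda|\,\|X\|=\|\lambda X\|=\|AX\|\le\|A\|\,\|X\|$. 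Dividing by $\|X\|>0$ yields $\rho(A)=|\lambda|\le\|A\|$, which is the claim. If instead ``compatible'' is read as a matrix norm compatible with a vector norm $\|\cdot\|_{v}$ (i.e. $\|Ay\|_{v}\le\|A\|\,\|y\|_{v}$), the conclusion is even more immediate: $|\lambda|\,\|x\|_{v}=\|Ax\|_{v}\le\|A\|\,\|x\|_{v}$ with $\|x\|_{v}>0$ forces $|\lambda|\le\|A\|$, and taking the maximum over $\Lambda(A)$ gives $\rho(A)\le\|A\|$.

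The one point that requires care, and hence the only real obstacle, is the bookkeeping when $\lambda$ is non-real while the norm is nominally given on $\mathbb{R}^{n\times n}$: then $x$, and therefore $X$, is complex. I would resolve this the standard way, by regarding the matrix norm as extended to $\mathbb{C}^{n\times n}$ (which is the natural setting, since $\rho(A)$ is itself defined through the complex spectrum), after which the computation above applies verbatim; alternatively one can appeal to Gelfand's formula $\rho(A)=\lim_{k\to\infty}\|A^{k}\|^{1/k}$ together with the submultiplicativity bound $\|A^{k}\|^{1/k}\le\|A\|$. Apart from this technicality the proof is routine, relying only on Lemma~\ref{lemma1}-type elementary properties and the defining axioms of a (compatible) matrix norm.
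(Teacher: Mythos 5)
Your proof is correct: the rank-one trick (building $X=[\,x\ \cdots\ x\,]$ from an eigenvector attaining $\rho(A)$, so that $AX=\lambda X$ and submultiplicativity gives $|\lambda|\,\|X\|\leq\|A\|\,\|X\|$) is the standard argument, and you rightly flag the only delicate point, namely that a non-real $\lambda$ forces $X$ to be complex; either of your fixes (extending the norm to $\mathbb{C}^{n\times n}$, or falling back on Gelfand's formula $\rho(A)=\lim_{k\to\infty}\|A^{k}\|^{1/k}\leq\|A\|$) closes that gap. The paper itself offers no proof to compare against --- Lemma~\ref{lemma2} is simply quoted from the textbook \cite{ref24} --- so there is no divergence of approach to discuss; your argument is the one found in standard references.
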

\begin{lemma}\label{lem03}\cite{ref24}
Let $\|\cdot\|$ be a matrix norm on $\mathbb{R}^{n\times n}$, and let $A,\ B \in \mathbb{R}^{n\times n}$. Then, it holds that $$\|A+B\|\leq\|A\|+\|B\|.$$
\end{lemma}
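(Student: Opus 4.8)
The plan is to recall that, in the terminology adopted here (following \cite{ref24}), a \emph{matrix norm} on $\mathbb{R}^{n\times n}$ is first of all a norm on $\mathbb{R}^{n\times n}$ regarded as an $n^{2}$-dimensional real vector space (frequently with the extra submultiplicativity requirement $\|AB\|\le\|A\|\,\|B\|$, which plays no role here). Under this reading the inequality $\|A+B\|\le\|A\|+\|B\|$ is precisely the sub-additivity axiom in the definition of a norm, so the "proof" amounts to unwinding that definition. I would state this explicitly so that Lemma \ref{lem03} is self-contained and ready to be quoted later, e.g.\ when the iteration error of R-GADI is split into several matrix terms in the convergence analysis.

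If instead a genuine derivation is wanted — appropriate when the relevant norm is an operator (induced) norm $\|A\|=\sup_{\|x\|=1}\|Ax\|$ attached to some vector norm $\|\cdot\|$ on $\mathbb{R}^{n}$, as is the case for the spectral norm $\|\cdot\|_{2}$ used throughout the paper — then I would argue as follows. For every unit vector $x$, the triangle inequality for the vector norm gives $\|(A+B)x\|=\|Ax+Bx\|\le\|Ax\|+\|Bx\|$. Taking the supremum over $\|x\|=1$ and using that the supremum of a sum is bounded by the sum of the suprema yields $\|A+B\|\le\sup_{\|x\|=1}\|Ax\|+\sup_{\|x\|=1}\|Bx\|=\|A\|+\|B\|$. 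The same two ingredients — the vector triangle inequality and monotonicity/sub-additivity of $\sup$ — also underlie Lemma \ref{lemma2}, so the two facts could be packaged together if convenient.

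There is no real obstacle here; the only point demanding care is to fix, once and for all, which class of norms the phrase "matrix norm" designates. Since Lemmas \ref{lemma2}--\ref{lem03} enter the sequel only through estimates of the form $\rho(M)\le\|M\|$ and $\|M+N\|\le\|M\|+\|N\|$ for the iteration matrices of the R-GADI/GADI schemes, any submultiplicative norm — in particular $\|\cdot\|_{2}$ — suffices, and for such norms both statements are standard. I would therefore keep the argument to a single short paragraph invoking the norm axioms (or citing \cite{ref24}) and proceed.
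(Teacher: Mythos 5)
Your proposal is correct: the paper states this lemma without proof, simply citing \cite{ref24}, since the inequality is nothing more than the sub-additivity axiom of a norm (or the standard one-line induced-norm argument you give). Your treatment matches the paper's intent, and your remark that only the spectral norm $\|\cdot\|_{2}$ is actually used in the sequel is accurate.
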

\begin{lemma}\label{lemma3}
Let $\mathscr{R}(X)=F^{T}X+XF-Q$, and $\{X_{k}\}$ be an approximate solution sequence of the Lyapunov equation \eqref{eq01} generated by GADI iteration \eqref{eq12}, and $X$ is a symmetric positive definite solution of equation \eqref{eq01}.\ Then for any $k\geq0$, we have\\
\begin{itemize}
\item[(1)]$(\alpha I+F^{T})(X_{k+\frac{1}{2}}-X)=(X_{k}-X)(\alpha I-F);$\\

\item[(2)]$(\alpha I+F^{T})(X_{k+\frac{1}{2}}-X_{k})=-\mathscr{R}(X_{k});$\\

\item[(3)]$\mathscr{R}(X_{k+\frac{1}{2}})=(X_{k+\frac{1}{2}}-X_{k})(F-\alpha I);$\\

\item[(4)]$(X_{k+1}-X)(\alpha I+F)=(X_{k}-X)F+(1-\omega)\alpha(X_{k+\frac{1}{2}}-X_{k})+(X_{k+\frac{1}{2}}-X);$\\

\item[(5)]$(X_{k+1}-X_{k+\frac{1}{2}})(\alpha I+F)=(X_{k+\frac{1}{2}}-X_{k})((1-\omega)\alpha I-F).$
\end{itemize}
\end{lemma}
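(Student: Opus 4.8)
The plan is to obtain all five identities by a single device: rewrite both the Lyapunov equation \eqref{eq01} and each half-step of the GADI recursion \eqref{eq12} in ``error form'' against the exact solution $X$. The first ingredient is to record that $X$ itself satisfies the two exact relations
\begin{equation*}
(\alpha I+F^{T})X=X(\alpha I-F)+Q,\qquad X(\alpha I+F)=X(F-(1-\omega)\alpha I)+(2-\omega)\alpha X .
\end{equation*}
The left-hand relation is just $F^{T}X+XF=Q$ with $\alpha X$ added to and subtracted from both sides; the right-hand relation is a pure algebraic identity, since the coefficients of $X$ satisfy $\alpha=-(1-\omega)\alpha+(2-\omega)\alpha$, and it does not even use that $X$ solves \eqref{eq01}. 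Throughout I use that $\alpha$ is chosen so that $\alpha I+F$ and $\alpha I+F^{T}$ are invertible, which is what makes $X_{k+\frac12}$ and $X_{k+1}$ well defined.

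For (1), I would subtract the left exact relation from the first line of \eqref{eq12}; the terms $Q$ cancel and $(\alpha I+F^{T})(X_{k+\frac12}-X)=(X_{k}-X)(\alpha I-F)$ drops out. For (2), I would instead expand $(\alpha I+F^{T})(X_{k+\frac12}-X_{k})$ using the first line of \eqref{eq12}: it equals $X_{k}(\alpha I-F)+Q-(\alpha I+F^{T})X_{k}=-(F^{T}X_{k}+X_{k}F-Q)=-\mathscr{R}(X_{k})$. For (3), I would isolate $F^{T}X_{k+\frac12}$ from the first line of \eqref{eq12}, substitute it into $\mathscr{R}(X_{k+\frac12})=F^{T}X_{k+\frac12}+X_{k+\frac12}F-Q$, and collect the $\alpha$-terms, which yields $(X_{k+\frac12}-X_{k})(F-\alpha I)$.

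For (4) and (5) I would work with the second line of \eqref{eq12}. For (5), subtract $X_{k+\frac12}(\alpha I+F)$ from both sides of $X_{k+1}(\alpha I+F)=X_{k}(F-(1-\omega)\alpha I)+(2-\omega)\alpha X_{k+\frac12}$ and regroup; the combination $(2-\omega)\alpha-\alpha=(1-\omega)\alpha$ turns the right-hand side into $(X_{k+\frac12}-X_{k})((1-\omega)\alpha I-F)$. For (4), subtract the right exact relation from the second line of \eqref{eq12}; writing $E_{j}:=X_{j}-X$ this gives $E_{k+1}(\alpha I+F)=E_{k}(F-(1-\omega)\alpha I)+(2-\omega)\alpha E_{k+\frac12}$, and then the split $(2-\omega)\alpha=\alpha+(1-\omega)\alpha$ together with $E_{k+\frac12}-E_{k}=X_{k+\frac12}-X_{k}$ regroups the right-hand side into the form asserted in (4).

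No step conceals a genuine difficulty: the lemma is a chain of rearrangements. The only thing that needs care, and is really the crux, is identifying the correct exact-solution form of each half-step so that the subtraction is legitimate, together with the bookkeeping identity $(2-\omega)\alpha=\alpha+(1-\omega)\alpha$ that makes the regroupings in (3), (4) and (5) land in the stated shape.
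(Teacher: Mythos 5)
Your proposal is correct and follows essentially the same route as the paper: a direct algebraic rearrangement of the two half-steps of \eqref{eq12}, with the same use of $Q-F^{T}X=XF$ for (1) and the same split $(2-\omega)\alpha=\alpha+(1-\omega)\alpha$ for (4)--(5). One small point: your regrouping in (4) (like the paper's own proof, and as required later in Theorem \ref{theorem1}) actually produces $\alpha(X_{k+\frac{1}{2}}-X)$ as the final term, so the missing factor $\alpha$ in the displayed statement of (4) is a typo in the lemma rather than a defect of your argument.
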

\begin{proof}
\begin{itemize}
\item[(1)]From the first equation of \eqref{eq12}, we have $$(\alpha I+F^{T})(X_{k+\frac{1}{2}}-X)=X_{k}(\alpha I-F)+Q-(\alpha I+F^{T})X.$$  Since $Q-F^{T}X=XF$, then
\begin{align*}
(\alpha I+F^{T})(X_{k+\frac{1}{2}}-X)&=XF-X_{k}F+X_{k}\alpha-\alpha X\\
&=(X_{k}-X)(\alpha I-F).
\end{align*}
\item[(2)]
\begin{align*}
(\alpha I+F^{T})(X_{k+\frac{1}{2}}-X_{k})&=X_{k}(\alpha I-F)+Q-(\alpha I+F^{T})X_{k}\\
&=Q-X_{k}F-F^{T}X_{k}\\
&=-\mathscr{R}(X_{k}).
\end{align*}
\item[(3)]Due to $F^{T}X_{k+\frac{1}{2}}-Q=X_{k}(\alpha I-F)-\alpha X_{k+\frac{1}{2}},$ then
\begin{align*}
\mathscr{R}(X_{k+\frac{1}{2}})&=F^{T}X_{k+\frac{1}{2}}+X_{k+\frac{1}{2}}F-Q\\
&=X_{k}(\alpha I-F)-\alpha X_{k+\frac{1}{2}}+X_{k+\frac{1}{2}}F\\
&=(X_{k+\frac{1}{2}}-X_{k})(F-\alpha I).
\end{align*}
\item[(4)]From the second equation of \eqref{eq12}, we have
\begin{align*}
(X_{k+1}-X)(\alpha I+F)&=X_{k}(F-(1-\omega)\alpha I)+(2-\omega)\alpha X_{k+\frac{1}{2}}-X(\alpha I+F)\\
&=X_{k}F-X_{k}(1-\omega)\alpha I+(1-\omega)\alpha X_{k+\frac{1}{2}}+\alpha X_{k+\frac{1}{2}}-\alpha X-XF\\
&=(X_{k}-X)F+(1-\omega)\alpha(X_{k+\frac{1}{2}}-X_{k})+\alpha(X_{k+\frac{1}{2}}-X).
\end{align*}
\item[(5)]
\begin{align*}
(X_{k+1}-X_{k+\frac{1}{2}})(\alpha I+F)&=X_{k}(F-(1-\omega)\alpha I)+(2-\omega)\alpha X_{k+\frac{1}{2}}\\
&-X(\alpha I+F)-X_{k+\frac{1}{2}}(\alpha I+F)\\
&=X_{k}F-X_{k+\frac{1}{2}}F+(1-\omega)\alpha(X_{k+\frac{1}{2}}-X_{k})\\
&=(X_{k+\frac{1}{2}}-X_{k})((1-\omega)\alpha I-F).
\end{align*}
\end{itemize}
\end{proof}
From the Lemma \ref{lemma3} , we can prove that Theorem \ref{theorem1} holds.
\begin{theorem}\label{theorem1}
Let $X$ be the symmetric positive definite solution of Lyapunov equation \eqref{eq01}, and let the initial matrix $X_{0}=0$, and parameter $\alpha>0,\ 0\leq\omega<2$.\ Then the matrix sequence $\{X_{k}\}$ generated by the previous GADI iteration \eqref{eq12} holds the following inequality.
$$\|X_{k+1}-X\|_{2}\leq\delta(\alpha)\|X_{k}-X\|_{2}+\eta(\alpha,\omega)\|\mathscr{R}(X_{k})\|_{2},$$
where 
\begin{align*}
\delta(\alpha)=\|F(\alpha I+F)^{-1}\|_{2}+\alpha\|(\alpha I+F^{T})^{-1}\|_{2}\|(\alpha I-F)(\alpha I+F)^{-1}\|_{2},\\
\eta(\alpha,\omega)=|1-\omega|\alpha\|(\alpha I+F^{T})^{-1}\|_{2}\|(\alpha I+F)^{-1}\|_{2}.
\end{align*}
\end{theorem}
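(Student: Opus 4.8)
The plan is to start from identity (4) of Lemma \ref{lemma3}, which after right-multiplication by $(\alpha I+F)^{-1}$ (legitimate since $\alpha>0$ together with the spectral assumption on $F$ makes $\alpha I+F$ and $\alpha I+F^{T}$ invertible) yields an explicit expression for the error:
$$X_{k+1}-X=(X_{k}-X)F(\alpha I+F)^{-1}+(1-\omega)\alpha(X_{k+\frac{1}{2}}-X_{k})(\alpha I+F)^{-1}+\alpha(X_{k+\frac{1}{2}}-X)(\alpha I+F)^{-1}.$$
The idea is then to eliminate the half-step iterate $X_{k+\frac{1}{2}}$ from the last two terms using the remaining identities of Lemma \ref{lemma3}, so that the right-hand side is expressed purely through $X_{k}-X$ and the residual $\mathscr{R}(X_{k})$, and finally to invoke the triangle inequality (Lemma \ref{lem03}) together with submultiplicativity of the spectral norm.

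Concretely, I would first use part (2) of Lemma \ref{lemma3}, namely $(\alpha I+F^{T})(X_{k+\frac{1}{2}}-X_{k})=-\mathscr{R}(X_{k})$, to write $X_{k+\frac{1}{2}}-X_{k}=-(\alpha I+F^{T})^{-1}\mathscr{R}(X_{k})$; substituting into the middle term turns it into $-(1-\omega)\alpha(\alpha I+F^{T})^{-1}\mathscr{R}(X_{k})(\alpha I+F)^{-1}$, whose spectral norm is at most $\eta(\alpha,\omega)\|\mathscr{R}(X_{k})\|_{2}$. Next I would use part (1) of Lemma \ref{lemma3}, namely $(\alpha I+F^{T})(X_{k+\frac{1}{2}}-X)=(X_{k}-X)(\alpha I-F)$, to write $X_{k+\frac{1}{2}}-X=(\alpha I+F^{T})^{-1}(X_{k}-X)(\alpha I-F)$; substituting into the last term gives $\alpha(\alpha I+F^{T})^{-1}(X_{k}-X)(\alpha I-F)(\alpha I+F)^{-1}$, whose norm is at most $\alpha\|(\alpha I+F^{T})^{-1}\|_{2}\,\|(\alpha I-F)(\alpha I+F)^{-1}\|_{2}\,\|X_{k}-X\|_{2}$. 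Bounding the first term trivially by $\|F(\alpha I+F)^{-1}\|_{2}\|X_{k}-X\|_{2}$ and collecting the coefficients of $\|X_{k}-X\|_{2}$ and of $\|\mathscr{R}(X_{k})\|_{2}$ reproduces exactly $\delta(\alpha)$ and $\eta(\alpha,\omega)$, which is the claimed inequality.

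There is no genuine analytic obstacle: the argument is a substitute-and-estimate exercise once Lemma \ref{lemma3} is available. The only places needing a little care are (i) tracking the factor $\alpha$ multiplying $(X_{k+\frac{1}{2}}-X)$ in identity (4) — the displayed statement of (4) drops it but its proof supplies it — since an error here would inflate the coefficient beyond $\delta(\alpha)$; and (ii) keeping $\alpha I-F$ and $(\alpha I+F)^{-1}$ grouped as the single factor $(\alpha I-F)(\alpha I+F)^{-1}$ (they are simultaneously polynomials in $F$, so this grouping is harmless) so that the norm appearing in $\delta(\alpha)$ matches the statement. It is also worth recording at the outset that the hypotheses guarantee the invertibility of $\alpha I+F$ and $\alpha I+F^{T}$, which is used repeatedly.
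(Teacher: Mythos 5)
Your proposal is correct and follows essentially the same route as the paper: start from identity (4) of Lemma \ref{lemma3}, substitute identities (2) and (1) to eliminate $X_{k+\frac{1}{2}}$, then apply the triangle inequality and submultiplicativity of the spectral norm to read off $\delta(\alpha)$ and $\eta(\alpha,\omega)$. Your remark about the missing factor $\alpha$ on $(X_{k+\frac{1}{2}}-X)$ in the displayed statement of identity (4) is well taken --- the paper's own proof of that identity (and of the theorem) indeed uses the corrected version $\alpha(X_{k+\frac{1}{2}}-X)$.
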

\begin{proof}
From Lemma \ref{lemma3}, we can conclude that
\begin{align*}
X_{k+1}-X&=[(X_{k}-X)F+(1-\omega)\alpha(X_{k+\frac{1}{2}}-X_{k})+\alpha(X_{k+\frac{1}{2}}-X)](\alpha I+F)^{-1}\\
&=(X_{k}-X)F(\alpha I+F)^{-1}-(1-\omega)\alpha(\alpha I+F^{T})^{-1}\mathscr{R}(X_{k})(\alpha I+F)^{-1}\\
&+\alpha(\alpha I+F^{T})^{-1}(X_{k}-X)(\alpha I-F)(\alpha I+F)^{-1}.
\end{align*}
Therefore, by Lemma \ref{lem03},we can get \begin{align*}
\|X_{k+1}-X\|_{2}&\leq\|(X_{k}-X)F(\alpha I+F)^{-1}\|_{2}+|1-\omega|\alpha\|(\alpha I+F^{T})^{-1}\mathscr{R}(X_{k})(\alpha I+F)^{-1}\|_{2}\\
&+\alpha\|(\alpha I+F^{T})^{-1}(X_{k}-X)(\alpha I-F)(\alpha I+F)^{-1}\|_{2}\\
&\leq\|(X_{k}-X)\|_{2}\|F(\alpha I+F)^{-1}\|_{2}+|1-\omega|\alpha\|(\alpha I+F^{T})^{-1}\|_{2}\|\mathscr{R}(X_{k})\|_{2}\|(\alpha I+F)^{-1}\|_{2}\\
&+\alpha\|(\alpha I+F^{T})^{-1}\|_{2}\|X_{k}-X\|_{2}\|(\alpha I-F)(\alpha I+F)^{-1}\|_{2}\\
&=[\|F(\alpha I+F)^{-1}\|_{2}+\alpha\|(\alpha I+F^{T})^{-1}\|_{2}\|(\alpha I-F)(\alpha I+F)^{-1}\|_{2}]\|X_{k}-X\|_{2}\\
&+|1-\omega|\alpha\|(\alpha I+F^{T})^{-1}\|_{2}\|(\alpha I+F)^{-1}\|_{2}\|\mathscr{R}(X_{k})\|_{2}.
\end{align*}
\end{proof}

\begin{theorem}\label{theorem2}
Let $X$ be the symmetric positive definite solution of Lyapunov equation \eqref{eq01}, and let parameters $\alpha>0$ and $0\leq\omega<2$, then for any $k=0,1,2, \cdots$,  the matrix $V_{k}W_{k}^{T}$ defined by \eqref{eq13} converges to $X$.  This is equivalent to the convergence of the iterative sequence ${X_ {k}}$ converging to $X$, where ${X_{k}}$ is obtained from \eqref{eq12}.
\end{theorem}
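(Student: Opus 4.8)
The plan is to reduce the whole statement to a single algebraic identity: that the factored pair $(V_k,W_k)$ generated by \eqref{eq13} satisfies $X_k=V_kW_k^{T}$ for every $k\ge 1$, where $\{X_k\}$ is the GADI sequence produced by \eqref{eq12} with $X_0=0$. Once this is established the two sequences are literally the same, so $V_kW_k^{T}\to X$ holds if and only if $X_k\to X$, and it only remains to recall why $\{X_k\}$ converges. Throughout I use the standing assumption on the Lyapunov equation \eqref{eq01}, namely that $\Lambda(F)$ lies in the open right half-plane, which guarantees $\alpha I+F$ and $\alpha I+F^{T}$ are invertible for $\alpha>0$ and that $X$ exists and is unique.

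\textbf{Step 1: the identity $X_k=V_kW_k^{T}$, by induction on $k$.} For $k=1$, the closed form derived before the theorem gives $X_1=(2-\omega)\alpha(\alpha I+F^{T})^{-1}C^{T}C(\alpha I+F)^{-1}$, which is exactly $V_1W_1^{T}$ with $V_1=W_1=\sqrt{(2-\omega)\alpha}\,(\alpha I+F^{T})^{-1}C^{T}$. For the inductive step, substitute $X_k=V_kW_k^{T}$ into the closed-form GADI recursion
\[
\begin{aligned}
X_{k+1}&=X_k(F-(1-\omega)\alpha I)(\alpha I+F)^{-1}\\
&\quad+(2-\omega)\alpha(\alpha I+F^{T})^{-1}X_k(\alpha I-F)(\alpha I+F)^{-1}+V_1W_1^{T},
\end{aligned}
\]
and rewrite each of the three summands as an outer product. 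Using $(\alpha I+F)^{-T}=(\alpha I+F^{T})^{-1}$, $(\alpha I-F)^{T}=\alpha I-F^{T}$, and the fact that $\alpha I\pm F$ commutes with $(\alpha I+F)^{-1}$ (all three are polynomials in $F$), the first summand equals $V_k\big[(\alpha I+F^{T})^{-1}(F^{T}-(1-\omega)\alpha I)W_k\big]^{T}$, the second equals $\big[\sqrt{(2-\omega)\alpha}\,(\alpha I+F^{T})^{-1}V_k\big]\big[\sqrt{(2-\omega)\alpha}\,(\alpha I+F^{T})^{-1}(\alpha I-F^{T})W_k\big]^{T}$, and the third is $V_1W_1^{T}$. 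Concatenating the three left factors reproduces $V_{k+1}=[V_k,\ \sqrt{(2-\omega)\alpha}(\alpha I+F^{T})^{-1}V_k,\ V_1]$ and concatenating the three right factors reproduces $W_{k+1}=[(\alpha I+F^{T})^{-1}(F^{T}-(1-\omega)\alpha I)W_k,\ \sqrt{(2-\omega)\alpha}(\alpha I+F^{T})^{-1}(\alpha I-F^{T})W_k,\ W_1]$, precisely as in \eqref{eq13}; hence $X_{k+1}=V_{k+1}W_{k+1}^{T}$, closing the induction. One should keep in mind that this factorization is highly redundant, since $V_k$ and $W_k$ carry $(2^{k}-1)p$ columns, but the identity is exact.

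\textbf{Step 2: convergence of the common sequence.} Applying the straightening operator and Lemma \ref{lemma1}, the iteration \eqref{eq12} is equivalent to the GADI iteration \eqref{eq11} for the linear system \eqref{eq10} with coefficient matrix $\mathcal{A}:=F^{T}\otimes I+I\otimes F^{T}$. By the Kronecker-sum eigenvalue formula (a consequence of Lemma \ref{lemma1}(e)), $\Lambda(\mathcal{A})=\{\lambda+\mu:\lambda,\mu\in\Lambda(F)\}$, which lies in the open right half-plane because $\Lambda(F)$ does; hence $\mathcal{A}$ is positive stable and \eqref{eq11} is an admissible GADI scheme, so for $\alpha>0$, $0\le\omega<2$ the sequence $\{x_k\}$ converges to $\mathcal{A}^{-1}q$ by the GADI convergence theory \cite{ref22}, i.e. $X_k\to X$. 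Combined with Step 1 this gives $V_kW_k^{T}\to X$; the quantitative bound of Theorem \ref{theorem1} may additionally be invoked to exhibit an explicit contraction factor once the parameters are fixed.

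The only delicate point is the bookkeeping in Step 1: one must track carefully on which side each transpose and each inverse acts, and verify that the block order of $V_{k+1}$ and $W_{k+1}$ in \eqref{eq13} matches the three outer-product summands in the correct order — in particular that the weight $(F^{T}-(1-\omega)\alpha I)(\alpha I+F^{T})^{-1}$ is attached to the $W$-factor of the first summand rather than to $V_k$. Everything else (the base case and the reduction of convergence to the known GADI result via Lemma \ref{lemma1}) is routine.
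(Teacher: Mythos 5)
Your proposal follows essentially the same route as the paper, with one substantive difference in how the convergence half is discharged. Step 1 (the induction showing $X_k=V_kW_k^{T}$) is exactly the content of the derivation the paper carries out immediately before the theorem statement; the paper's proof proper simply opens with ``we only need to prove the convergence of the iterative format \eqref{eq12}'' and takes that identity as already established, so your careful bookkeeping of the three outer-product summands is a welcome (and correct) filling-in of that step. Where you diverge is Step 2: the paper proves $\rho(T(\alpha,\omega))<1$ from scratch, writing $T(\alpha,\omega)=\tfrac12\left[(2-\omega)T(\alpha)+\omega I_{n^{2}}\right]$, deducing $\rho(T(\alpha,\omega))\le\tfrac12\left[(2-\omega)\rho(T(\alpha))+\omega\right]$, and then bounding $\rho(T(\alpha))\le\|F_{L}\|_{2}\|F_{R}\|_{2}<1$ via a similarity transformation and an explicit Rayleigh-quotient estimate; you instead delegate this to the general GADI convergence theory of \cite{ref22}. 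That shortcut is acceptable in spirit but is stated too quickly: positive stability of the Kronecker sum $\mathcal{A}=F^{T}\otimes I+I\otimes F^{T}$ (i.e.\ $\mathrm{Re}\,\Lambda(\mathcal{A})>0$) is not by itself what drives the contraction. The quantity that actually enters the norm estimates for $\|F_{L}\|_{2}$ and $\|F_{R}\|_{2}$ is the quadratic form $x^{T}(I\otimes F+I\otimes F^{T})x$, i.e.\ the positive definiteness of the symmetric part of $F$, which for non-normal $F$ is strictly stronger than having eigenvalues in the open right half-plane. So if you invoke \cite{ref22} you must verify that its hypotheses are met by the splitting $M=I\otimes F^{T}$, $N=F^{T}\otimes I$ (or restrict to the setting where the symmetric part of $F$ is positive definite); otherwise you should reproduce the paper's explicit estimates. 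With that caveat addressed, the argument is complete and matches the paper's.
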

\begin{proof}
Here, we only need to prove the convergence of the iterative format \eqref{eq12}, since $\alpha>0$ and $A$ are stable, then $\alpha I_{n^{2}}+I\otimes F^{T}$ and $\alpha I_{n^{2}}+F^{T}\otimes I$ are non-singular. From the iteration framework \eqref{eq11}, we can obtain$$x_{k+\frac{1}{2}}=(\alpha I_{n^{2}}+I\otimes F^{T})^{-1}(\alpha I_{n^{2}}-F^{T}\otimes I)x_{k}+(\alpha I_{n^{2}}+I\otimes F^{T})^{-1}q,$$
\begin{align*}
x_{k+1}&=(\alpha I_{n^{2}}+F^{T}\otimes I)^{-1}[(F^{T}\otimes I-(1-\omega)\alpha I_{n^{2}})\\
&+(2-\omega)\alpha(\alpha I_{n^{2}}+I\otimes F^{T})^{-1}(\alpha I_{n^{2}}-F^{T}\otimes I)]x_{k}\\
&+(2-\omega)\alpha(\alpha I_{n^{2}}+F^{T}\otimes I)^{-1}(\alpha I_{n^{2}}+I\otimes F^{T})^{-1}q\\
&=(\alpha I_{n^{2}}+F^{T}\otimes I)^{-1}(\alpha I_{n^{2}}+I\otimes F^{T})^{-1}[(\alpha^{2}I_{n^{2}}\\
&+(I\otimes F^{T})(F^{T}\otimes I)-(1-\omega)\alpha(I\otimes F^{T}+F^{T}\otimes I)]x_{k}\\
&+(2-\omega)\alpha(\alpha I_{n^{2}}+F^{T}\otimes I)^{-1}(\alpha I_{n^{2}}+I\otimes F^{T})^{-1}q.
\end{align*}
Let $$M=I\otimes F^{T}+F^{T}\otimes I,$$
$$G(\alpha,\omega)=(2-\omega)\alpha(\alpha I_{n^{2}}+F^{T}\otimes I)^{-1}(\alpha I_{n^{2}}+I\otimes F^{T})^{-1},$$
$$T(\alpha,\omega)=(\alpha I_{n^{2}}+F^{T}\otimes I)^{-1}(\alpha I_{n^{2}}+I\otimes F^{T})^{-1}[(\alpha^{2}I_{n^{2}}+(I\otimes F^{T})(F^{T}\otimes I)-(1-\omega)\alpha M],$$
then
$$x_{k+1}=T(\alpha,\omega)x_{k}+G(\alpha,\omega)q.$$

Next, we need to prove that for $\alpha>0,\ 0\leq\omega<2$, there is $\rho(T(\alpha,\omega))<1$.
Since$$2\alpha M=-(\alpha I_{n^{2}}-I\otimes F^{T})(\alpha I_{n^{2}}-F^{T}\otimes I)+(\alpha I_{n^{2}}+I\otimes F^{T})(\alpha I_{n^{2}}+F^{T}\otimes I),$$then we can obtain
\begin{align*}
T(\alpha,\omega)&=(\alpha I_{n^{2}}+F^{T}\otimes I)^{-1}(\alpha I_{n^{2}}+I\otimes F^{T})^{-1}[(\alpha^{2}I_{n^{2}}+(I\otimes F^{T})(F^{T}\otimes I)-(1-\omega)\alpha M]\\
&=(\alpha I_{n^{2}}+F^{T}\otimes I)^{-1}(\alpha I_{n^{2}}+I\otimes F^{T})^{-1}[(\alpha I_{n^{2}}-I\otimes F^{T})(\alpha I_{n^{2}}-F^{T}\otimes I)+\omega\alpha M]\\
&=\frac{1}{2}(\alpha I_{n^{2}}+F^{T}\otimes I)^{-1}(\alpha I_{n^{2}}+I\otimes F^{T})^{-1}[2(\alpha I_{n^{2}}-I\otimes F^{T})(\alpha I_{n^{2}}-F^{T}\otimes I)+\omega2\alpha M]\\
&=\frac{1}{2}[(2-\omega)(\alpha I_{n^{2}}+F^{T}\otimes I)^{-1}(\alpha I_{n^{2}}+I\otimes F^{T})^{-1}(\alpha I_{n^{2}}-I\otimes F^{T})(\alpha I_{n^{2}}-F^{T}\otimes I)+\omega I_{n^{2}}]\\
&=\frac{1}{2}[(2-\omega)T(\alpha)+\omega I_{n^{2}}],
\end{align*}
where $$T(\alpha)=(\alpha I_{n^{2}}+F^{T}\otimes I)^{-1}(\alpha I_{n^{2}}+I\otimes F^{T})^{-1}(\alpha I_{n^{2}}-I\otimes F^{T})(\alpha I_{n^{2}}-F^{T}\otimes I).$$
Due to $\lambda(T(\alpha,\omega))=\frac{1}{2}[(2-\omega)\lambda(T(\alpha))+\omega]$, then we have $$\rho(T(\alpha,\omega))\leq\frac{1}{2}[(2-\omega)\rho(T(\alpha))+\omega].$$
Let $$\widetilde{T}(\alpha)=(\alpha I_{n^{2}}+I\otimes F^{T})^{-1}(\alpha I_{n^{2}}-I\otimes F^{T})(\alpha I_{n^{2}}-F^{T}\otimes I)(\alpha I_{n^{2}}+F^{T}\otimes I)^{-1},$$ it can be seen that $T(\alpha)$ is similar to $\widetilde{T}(\alpha)$ through the matrix $\alpha I_{n^{2}}+F^{T}\otimes I$. Therefore, by Lemma \ref{lemma2} we get
\begin{align*}
\rho(T(\alpha))&\leq\parallel(\alpha I_{n^{2}}+I\otimes F^{T})^{-1}(\alpha I_{n^{2}}-I\otimes F^{T})(\alpha I_{n^{2}}-F^{T}\otimes I)(\alpha I_{n^{2}}+F^{T}\otimes I)^{-1}\parallel_{2}\\
&\leq\parallel(\alpha I_{n^{2}}+I\otimes F^{T})^{-1}(\alpha I_{n^{2}}-I\otimes F^{T})\parallel_{2}\parallel(\alpha I_{n^{2}}-F^{T}\otimes I)(\alpha I_{n^{2}}+F^{T}\otimes I)^{-1}\parallel_{2}\\
&=\parallel F_{L}\parallel_{2}\parallel F_{R}\parallel_{2},
\end{align*}
where $$F_{L}=(\alpha I_{n^{2}}+I\otimes F^{T})^{-1}(\alpha I_{n^{2}}-I\otimes F^{T}),\ \ F_{R}=(\alpha I_{n^{2}}-F^{T}\otimes I)(\alpha I_{n^{2}}+F^{T}\otimes I)^{-1}.$$

For $x\in\mathbb R^{n^{2}\times1}$, we get
\begin{align*}
\parallel F_{L}\parallel_{2}^{2}&=\max_{\parallel x\parallel_{2}=1}\frac{\parallel(\alpha I_{n^{2}}-I\otimes F^{T})x\parallel_{2}^{2}}{\parallel(\alpha I_{n^{2}}+I\otimes F^{T})x\parallel_{2}^{2}}\\
&=\max_{\parallel x\parallel_{2}=1}\frac{\parallel(I\otimes F^{T})x\parallel_{2}^{2}-\alpha x^{T}(I\otimes F+I\otimes F^{T})x+\alpha^{2}}{\parallel(I\otimes F^{T})x\parallel_{2}^{2}+\alpha x^{T}(I\otimes F+I\otimes F^{T})x+\alpha^{2}}\\
&\leq\max_{\parallel x\parallel_{2}=1}\frac{\parallel(I\otimes F^{T})x\parallel_{2}^{2}-2\alpha\min Re(\lambda(I\otimes F))+\alpha^{2}}{\parallel(I\otimes F^{T})x\parallel_{2}^{2}+2\alpha\min Re(\lambda(I\otimes F))+\alpha^{2}}\\
&\leq\frac{\parallel I\otimes F^{T}\parallel_{2}^{2}-2\alpha\min Re(\lambda(I\otimes F))+\alpha^{2}}{\parallel I\otimes F^{T}\parallel_{2}^{2}+2\alpha\min Re(\lambda(I\otimes F))+\alpha^{2}}\\
&=\frac{\parallel I\otimes F^{T}\parallel_{2}^{2}-2\alpha\min Re(\lambda(F))+\alpha^{2}}{\parallel I\otimes F^{T}\parallel_{2}^{2}+2\alpha\min Re(\lambda(F))+\alpha^{2}}.
\end{align*}
Similarly, we have a conclusion
$$\parallel F_{R}\parallel_{2}^{2}\leq\frac{\parallel F^{T}\otimes I\parallel_{2}^{2}-2\alpha\min Re(\lambda(F))+\alpha^{2}}{\parallel F^{T}\otimes I\parallel_{2}^{2}+2\alpha\min Re(\lambda(F))+\alpha^{2}},$$
since $A$ is stable and $\alpha>0$, then $$\parallel F_{L}\parallel_{2}<1,\ \ \parallel F_{R}\parallel_{2}<1.$$
Therefore, $\rho(T(\alpha))<1$ and $\rho(T(\alpha,\omega))\leq\frac{1}{2}[(2-\omega)\rho(T(\alpha))+\omega]<1$.
\end{proof}

\subsection{Selection of parameters}
\qquad In this section, we first compare the convergence rates of the GADI method and the ADI method, where $\rho(T(\alpha,\ \omega))$ and $\rho(T(\alpha))$ used here are defined in the previous proof of Theorem \ref{theorem1}. Afterwards, we will provide a more practical method to select parameters.
\begin{theorem}\label{theorem3}
Assuming that the eigenvalues of matrix $F$ have positive real parts, define$$\rho(T(\alpha,\omega))=|\xi|=|a+bi|,\ \ \rho(T(\alpha))=|\eta|=|c+di|,$$
\begin{itemize}
\item[(i)]if $|\eta|^{2}\leq c$, then $$\rho(T(\alpha))<\rho(T(\alpha,\omega))<1;$$
\item[(ii)]if $|\eta|^{2}>c$ and $0<\omega<\frac{4(|\eta_{k}|^{2}-c_{k})}{(1-c_{k})^{2}+d_{k}^{2}}<2$, we have $$\rho(T(\alpha,\omega))<\rho(T(\alpha))<1.$$
\end{itemize}
\end{theorem}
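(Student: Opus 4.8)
The plan is to reduce the whole statement to the scalar affine relation between the spectra of $T(\alpha,\omega)$ and $T(\alpha)$ that was already extracted in the analysis in the proof of Theorem~\ref{theorem2}, namely $\lambda(T(\alpha,\omega)) = \tfrac12[(2-\omega)\lambda(T(\alpha)) + \omega]$. Writing $\eta = c + di$ for the eigenvalue of $T(\alpha)$ realizing $\rho(T(\alpha))$ and $\xi = a + bi$ for the corresponding eigenvalue of $T(\alpha,\omega)$, this gives $a = \tfrac12[(2-\omega)c + \omega]$ and $b = \tfrac12(2-\omega)d$, hence $4|\xi|^2 = ((2-\omega)c+\omega)^2 + (2-\omega)^2 d^2$ while $4|\eta|^2 = 4(c^2+d^2)$. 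Everything then comes down to tracking the sign of $|\xi|^2 - |\eta|^2$ as $\omega$ ranges over $[0,2)$.

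The key computation I would carry out is the factorization
\[
4\bigl(|\xi|^2 - |\eta|^2\bigr) = -\omega\Bigl[(4-\omega)\bigl((c-1)^2 + d^2\bigr) + 4(c-1)\Bigr] = -\omega\Bigl[4(|\eta|^2 - c) - \omega\bigl((c-1)^2 + d^2\bigr)\Bigr],
\]
where the first equality follows because the quadratic in $\omega$ on the left vanishes at $\omega=0$, and the second uses $4(c-1)^2 + 4d^2 + 4(c-1) = 4(c^2 - c + d^2) = 4(|\eta|^2 - c)$. From Theorem~\ref{theorem2} we already know $\rho(T(\alpha)) < 1$, so $|\eta| < 1$; in particular $\eta \ne 1$, whence $(c-1)^2 + d^2 > 0$, and the bracket is a strictly decreasing affine function of $\omega$ taking the value $4(|\eta|^2 - c)$ at $\omega = 0$.

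For part (i), the hypothesis $|\eta|^2 \le c$ makes the bracket $\le 4(|\eta|^2-c)\le 0$ for all $\omega\in[0,2)$ and strictly negative once $\omega>0$, so $4(|\xi|^2-|\eta|^2) = -\omega\cdot(\text{negative}) > 0$, giving $\rho(T(\alpha)) = |\eta| < |\xi| = \rho(T(\alpha,\omega))$, and $\rho(T(\alpha,\omega)) < 1$ is inherited from Theorem~\ref{theorem2}. For part (ii), the hypothesis $|\eta|^2 > c$ together with $0 < \omega < \frac{4(|\eta|^2-c)}{(1-c)^2+d^2}$ keeps the bracket strictly positive, so $4(|\xi|^2-|\eta|^2) = -\omega\cdot(\text{positive}) < 0$, i.e. $\rho(T(\alpha,\omega)) = |\xi| < |\eta| = \rho(T(\alpha)) < 1$; the extra requirement $\frac{4(|\eta|^2-c)}{(1-c)^2+d^2} < 2$ is precisely what guarantees this admissible window for $\omega$ is nonempty within $[0,2)$.

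The main obstacle I anticipate is not the algebra but the eigenvalue bookkeeping: the affine map $\lambda \mapsto \tfrac12[(2-\omega)\lambda + \omega]$ need not carry the modulus-maximizing eigenvalue of $T(\alpha)$ to the modulus-maximizing eigenvalue of $T(\alpha,\omega)$, so I would need to argue that the pairing $\eta \leftrightarrow \xi$ used above is indeed the relevant one — for instance by invoking the bound $\rho(T(\alpha,\omega)) \le \tfrac12[(2-\omega)\rho(T(\alpha)) + \omega]$ from the proof of Theorem~\ref{theorem2} to control the extremal eigenvalue — or otherwise by phrasing the conclusion in terms of paired eigenvalues. Beyond that, the only loose ends are confirming $(c-1)^2 + d^2 \ne 0$ (which follows from $\rho(T(\alpha)) < 1$ as noted) and checking the endpoint behaviour as $\omega \to 2^-$, both routine.
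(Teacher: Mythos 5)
The paper does not actually prove Theorem~\ref{theorem3}: it states only that the proof is ``similar to the proof process of Theorem 2.7 and Theorem 2.9 in \cite{ref23}'' and omits the details, so there is no in-paper argument to compare against. Your route is the natural one and your central computation is correct: starting from $T(\alpha,\omega)=\frac12[(2-\omega)T(\alpha)+\omega I]$, the identity
$4\bigl(|\xi|^{2}-|\eta|^{2}\bigr)=-\omega\bigl[4(|\eta|^{2}-c)-\omega((c-1)^{2}+d^{2})\bigr]$
checks out (using $|\eta|^{2}-2c+1=(c-1)^{2}+d^{2}$), and the sign analysis in the two regimes, together with $\rho(T(\alpha))<1$ from Theorem~\ref{theorem2}, gives exactly the dichotomy claimed.

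The one genuine gap is the eigenvalue-pairing issue you yourself flag, and your proposed repair does not work for part (ii). The bound $\rho(T(\alpha,\omega))\le\frac12[(2-\omega)\rho(T(\alpha))+\omega]$ from the proof of Theorem~\ref{theorem2} is useless here: with $r=\rho(T(\alpha))<1$ and $\omega>0$ it equals $r+\frac{\omega}{2}(1-r)>r$, so it can never certify $\rho(T(\alpha,\omega))<\rho(T(\alpha))$. The correct resolution is to run your scalar computation on \emph{every} eigenvalue $\eta_{k}=c_{k}+d_{k}i$ of $T(\alpha)$ (the subscripts $k$ appearing in the hypothesis of part (ii) are the hint that the condition $0<\omega<\frac{4(|\eta_{k}|^{2}-c_{k})}{(1-c_{k})^{2}+d_{k}^{2}}$ is meant per eigenvalue): if it holds for all $k$, then every image eigenvalue $\xi_{k}$ satisfies $|\xi_{k}|<|\eta_{k}|\le\rho(T(\alpha))$, and taking the maximum over $k$ gives $\rho(T(\alpha,\omega))<\rho(T(\alpha))$. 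For part (i) no such care is needed, since exhibiting the single image $\xi$ of the extremal $\eta$ with $|\xi|>|\eta|$ already forces $\rho(T(\alpha,\omega))\ge|\xi|>\rho(T(\alpha))$. You should also note, as you implicitly do, that strictness in (i) requires $\omega>0$; at $\omega=0$ the two operators coincide.
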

\begin{theorem}\label{theorem4}
Let $\sigma_{j}(A)$ and $\lambda_{j}(F)$ represent the singular and eigenvalues of the coefficient matrix $F$ of the Lyapunov equation \eqref{eq01}, respectively, with $j=1,2,\cdots,n$. Let $$\mu=\max_{j}\sigma_{j}(F),\ \ \nu=\min_{j}Re(\lambda_{j}(F)),$$
then we can obtain the optimal parameters $\alpha^{\ast}$ as follows$$\alpha^{\ast}=arg \min\limits_{\alpha}\frac{\mu^{2}-2\alpha\nu+\alpha^{2}}{\mu^{2}+2\alpha\nu+\alpha^{2}}=\mu.$$
\end{theorem}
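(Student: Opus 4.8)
The plan is to treat the problem as a one-variable calculus optimization over $\alpha > 0$ of the scalar function
$$f(\alpha) = \frac{\mu^{2} - 2\alpha\nu + \alpha^{2}}{\mu^{2} + 2\alpha\nu + \alpha^{2}},$$
which is exactly the bound that appeared for $\|F_L\|_2^2$ and $\|F_R\|_2^2$ in the proof of Theorem \ref{theorem2} (with $\mu = \max_j \sigma_j(F) = \|F\|_2 = \|F^T \otimes I\|_2 = \|I \otimes F^T\|_2$ and $\nu = \min_j \mathrm{Re}(\lambda_j(F))$). Since $F$ has eigenvalues with positive real part, $\nu > 0$, and the denominator is strictly positive for all $\alpha > 0$, so $f$ is smooth there. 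First I would compute $f'(\alpha)$ via the quotient rule; writing $p(\alpha) = \mu^2 - 2\alpha\nu + \alpha^2$ and $q(\alpha) = \mu^2 + 2\alpha\nu + \alpha^2$, the numerator of $f'$ is $p'q - pq'$, and a short expansion shows this is proportional to $\alpha^2 - \mu^2$ (the $\nu$-dependent cross terms cancel, leaving a factor like $4\nu(\alpha^2 - \mu^2)$ up to the positive denominator $q(\alpha)^2$).

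Next I would locate the critical point: $f'(\alpha) = 0$ on $\alpha > 0$ forces $\alpha = \mu$. Checking the sign of $f'$ shows $f' < 0$ for $0 < \alpha < \mu$ and $f' > 0$ for $\alpha > \mu$, so $\alpha^\ast = \mu$ is the unique global minimizer on $(0,\infty)$. One may also note the boundary behavior $f(\alpha) \to 1$ as $\alpha \to 0^+$ and as $\alpha \to \infty$, which confirms the interior critical point is a minimum rather than a maximum. Evaluating gives the minimal value $f(\mu) = \frac{2\mu^2 - 2\mu\nu}{2\mu^2 + 2\mu\nu} = \frac{\mu - \nu}{\mu + \nu} < 1$, consistent with the convergence statement of Theorem \ref{theorem2}.

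The main obstacle is not any deep difficulty but rather making precise the claim that minimizing the scalar surrogate $f(\alpha)$ is the right thing to do — i.e., that driving down the common upper bound $\|F_L\|_2\|F_R\|_2 \le f(\alpha)$ is a legitimate proxy for minimizing the true convergence factor $\rho(T(\alpha))$ (or $\rho(T(\alpha,\omega))$). Strictly, $\alpha^\ast = \mu$ is optimal for the derived bound, not provably optimal for the spectral radius itself; I would state this explicitly so the ``optimal parameter'' terminology is understood in the sense of the bound from Theorem \ref{theorem2}. Everything else is the routine quotient-rule computation and sign analysis sketched above, which I would carry out in a few lines.
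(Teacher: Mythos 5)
Your proof is correct: the derivative computation indeed gives numerator $4\nu(\alpha^{2}-\mu^{2})$ after the cross terms cancel, so $\alpha^{\ast}=\mu$ is the unique minimizer of the bound with minimal value $\frac{\mu-\nu}{\mu+\nu}<1$. The paper itself omits the proof (deferring to Theorem 2.9 of \cite{ref23}), but the intended argument is exactly this minimization of the scalar bound from the proof of Theorem \ref{theorem2}, and your explicit caveat that $\mu$ is optimal only for that upper bound, not provably for $\rho(T(\alpha))$ itself, is a point the paper leaves implicit.
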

The proof of Theorem \ref{theorem3} and Theorem \ref{theorem4} is similar to the proof process of Theorem 2.7 and Theorem 2.9 in \cite{ref23}, and we will not delve into the details here.

For the parameter $\omega$, since $0\leq\omega<2$, special values 0 and 1 can be selected for validation first. The general method is to select the appropriate parameter $\omega$ by analyzing residuals in numerical examples.
\subsection{Complexity analysis}
\qquad According to the iterative scheme \eqref{eq11}, this method is primarily used for efficiently solving large sparse Lyapunov equations. The approach involves utilizing Cholesky factorization and representing the solution in the form of low-rank factors. Before starting Algorithm \ref{algorithm1}, we need to determine the values of parameters $\alpha$ and $\omega$. Finding suitable parameter values can be challenging as the selection of parameters significantly impacts the iterative results and convergence speed. In Algorithm \ref{algorithm1}, a stopping criterion needs to be set to compute the maximum number of iterations. One approach is to stop the iteration when the change in the approximate solution is small, i.e., $\parallel X_{k}-X_{k-1}\parallel<\varepsilon$, where
$$\parallel X_{k}-X_{k-1}\parallel=\parallel V_{k}W_{k}^{T}-V_{k-1}W_{k-1}^{T}\parallel.$$

However, computing the norm of the approximate solution on high-dimensional matrices can be computationally expensive. We adopt an alternative approach by measuring the relative residual of the approximate solution $X_{k}$ generated by the low-rank GADI iteration \eqref{eq12} at the $k$-th step. The relative residual $\text{Res}(X_{k})$ is defined as follows:
\begin{equation*}
\text{Res}(X_{k})=\frac{\|F^{T}X_{k}+X_{k}F-Q\|_{2}}{\|Q\|_{2}}.
\end{equation*}
Therefore, with a given accuracy requirement $\varepsilon$, we stop the iteration when Res$(X_{k})<\varepsilon$.

Next, we calculate the computational complexity of Algorithm \ref{algorithm1}. During the iteration process, the number of columns in matrices $V_{k}$ and $W_{k}$ increases with the number of iterations, i.e., $V_{k},\ W_{k}\in\mathbb R^{n\times(2^{k}-1)p}$, where $p\ll n$. Firstly, in the first iteration, the computational cost of obtaining $V_{1}$ is $2n^{2}p$, and at this point, $W_{1}=V_{1}$. Then, in the $k$-th iteration, the computational cost of obtaining $V_{k}$ is $2n^{2}(2^{k-1}-1)p$, and the computational cost of obtaining $W_{k}$ is $4n^{3}+2n^{2}(2^{k-1}-1)p+2n^{2}p$. Therefore, the total computational complexity of this algorithm is $4n^{3}+4n^{2}2^{k-1}p$.
\subsection{Numerical experiments}\

\textbf{Example 2.5.1\ \ }\ We take the coefficient matrix of the Lyapunov equation \eqref{eq01} as$$F=
\left(
  \begin{array}{ccccc}
    5   &  0.3   &  0   &  \cdots   &  0\\
     0.2  &  5   &  0.3  &   \cdots   &  0\\
     \vdots   &  \ddots  &  \ddots  &   \ddots   &  \vdots\\
     0   &  \cdots   &  0.2  &   5   &  0.3\\
     0   & \cdots   &  0  &   0.2  &   5\\
  \end{array}
\right)_{n\times n},$$
$$C=\left(
                     \begin{array}{ccccc}
                       1, & 1 ,& \cdots ,& 1 ,& 1 \\
                     \end{array}
                   \right)_{1\times n},$$
with $Q=C^{T}C$.

We set the parameters $\omega=0.015$ and $\alpha$ to be $\sqrt{\lambda_{max}(F)\lambda_{min}(F)}$ and $\max\sigma(F)$, respectively. We compare the numerical results for different matrix dimensions and different choices of $\alpha$, which are listed in Table \ref{table1} below. From the data in the table, it can be observed that during the iteration process, when the matrix dimensions are the same and $\alpha$ is chosen as the maximum singular value of matrix $F$, the total iteration time is shorter, resulting in better performance.
\begin{table}[!htbp]
	\centering
	\begin{tabular}{cccccc}
	\hline
$n$ & algorithm & $\alpha$ & Res &IT &CPU\\ \hline
128& R-GADI & $\sqrt{\lambda_{max}(F)\lambda_{min}(F)}$  & 4.2129e-16 & 8 & 0.06s  \\
128&R-GADI &$\max\sigma(F)$  & 4.5781e-16 & 8 & 0.05s  \\ \hline
256& R-GADI & $\sqrt{\lambda_{max}(F)\lambda_{min}(F)}$  & 4.3544e-16 & 8 & 0.24s  \\
256&R-GADI& $\max\sigma(F)$  & 4.3033e-16 & 8 & 0.22s  \\ \hline
512&R-GADI &  $\sqrt{\lambda_{max}(F)\lambda_{min}(F)}$  & 2.3044e-16 & 8 & 1.34s  \\
 512&R-GADI &$\max\sigma(F)$  & 5.7213e-16 & 7 & 1.27s  \\ \hline
1024& R-GADI & $\sqrt{\lambda_{max}(F)\lambda_{min}(F)}$  & 4.3802e-16 & 8 & 28.64s  \\
1024& R-GADI&$\max\sigma(F)$  & 9.9827e-16 & 7 & 15.51s  \\ \hline
2048& R-GADI & $\sqrt{\lambda_{max}(F)\lambda_{min}(F)}$  & 9.4348e-15 & 7 &219.72s  \\
 2048&R-GADI &$\max\sigma(F)$  & 1.1144e-15 & 7 & 191.26s  \\  \hline
4096& R-GADI & $\sqrt{\lambda_{max}(F)\lambda_{min}(F)}$  & 4.4282e-16 & 8 &4408.41s  \\
4096& R-GADI& $\max\sigma(F)$  & 8.887e-16 & 7 & 1223.03s  \\ \hline
 \end{tabular}
 \caption{Numerical results for Example 2.5.1\label{table1}}
  \end{table}

Next, we set the parameters $\alpha=\max\sigma(F),\ \omega=0.015$. In the case of matrix dimensions being multiplied, we solve the problem using both the GADI method and the R-GADI method. Table \ref{table2} presents the numerical results for relative residual, iteration count, and CPU time. In terms of iteration count and time, the R-GADI method is relatively more efficient. For instance, when $n=1024$, Fig. \ref{figure1} shows the iteration count and relative residual obtained by applying these two methods iteratively, while Fig. \ref{figure2} displays the computation time required by each method. It is evident from the figures that the R-GADI method is effective.

\begin{table}[!htbp]
	\label{table2}
	\centering
	\begin{tabular}{ccccc}
	\hline
$n$ &algorithm & Res &IT &CPU\\ \hline
128& GADI & 9.5268e-16 & 8 & 0.07s  \\
128& R-GADI  & 4.5781e-16 & 8 & 0.05s  \\ \hline
256& GADI & 2.1253e-16 & 8 & 0.25s  \\
256& R-GADI  & 4.3033e-16 & 8 & 0.22s  \\ \hline
512& GADI & 4.2985e-16 & 8 & 1.49s  \\
512 & R-GADI  & 5.7213e-16 & 7 & 1.27s  \\ \hline
1024& GADI & 5.4699e-16 & 7 & 30.95s  \\
1024 & R-GADI  & 9.9827e-16 & 7 & 15.51s \\ \hline
2048& GADI & 9.8707e-15 & 7 & 380.3s  \\
2048& R-GADI  & 1.1144e-15 & 7 & 191.26s \\ \hline
4096& GADI & 9.6536e-15 & 7 & 2568.69s  \\ 
4096& R-GADI  & 8.887e-16 & 7 & 1223.03s  \\ \hline
\hline
	\end{tabular}
	\caption{Numerical results for Example 2.5.1\label{table2}}
	\end{table}

 \begin{figure}[H]\label{figure1}
\centering
    \begin{minipage}[t]{0.49\textwidth}
        \centering
        \includegraphics[width=1.1\textwidth]{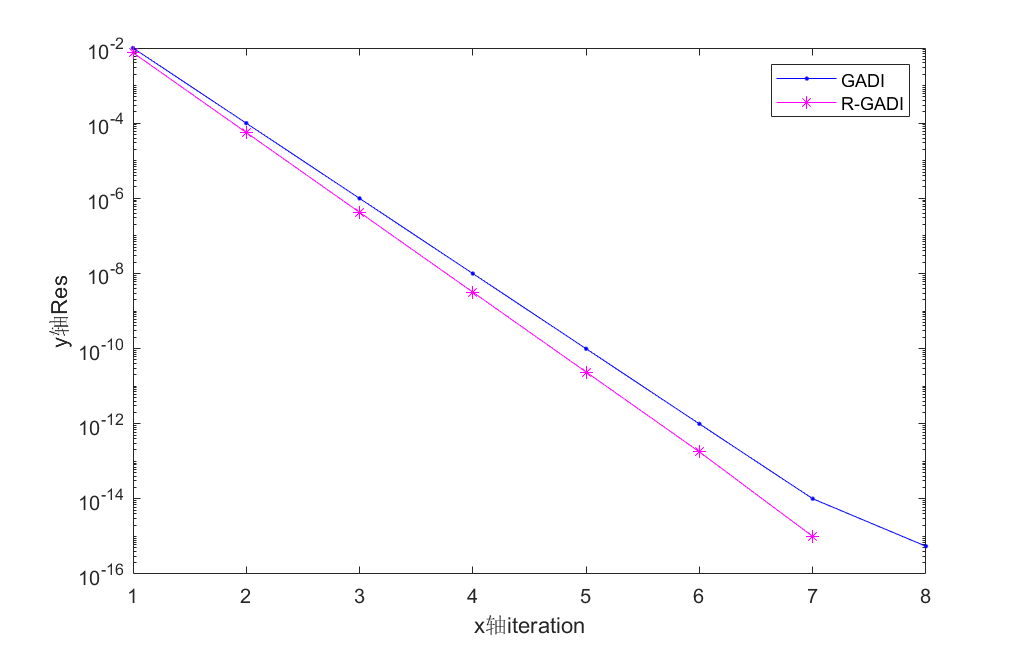}
        \caption{The residual curve of Example 2.5.1~$n=1024$\label{figure1}}
    \end{minipage}
    \begin{minipage}[t]{0.49\textwidth}
        \centering
        \includegraphics[width=1.1\textwidth]{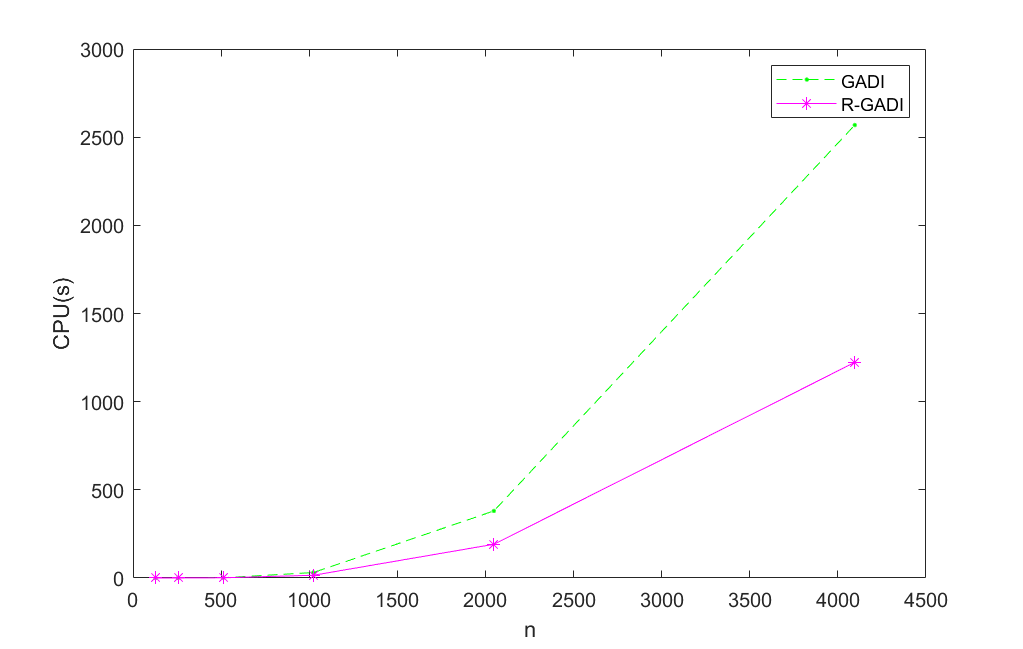}
        \caption{The time curve of Example 2.5.1\label{figure2}}
        \end{minipage}
\end{figure}
\textbf{Example 2.5.2\ \ }\ We take the coefficient matrix of the Lyapunov equation \eqref{eq01} as$$F=
\left(
  \begin{array}{ccccc}
    9   &  3   &  0   &  \cdots   &  0\\
     -2  &  9   &  3  &   \cdots   &  0\\
     \vdots   &  \ddots  &  \ddots  &   \ddots   &  \vdots\\
     0   &  \cdots   &  -2  &   9   &  3\\
     0   & \cdots   &  0  &   -2  &   9\\
  \end{array}
\right)_{n\times n}, $$
$$C=\left(
                     \begin{array}{ccccc}
                       1, & 1 ,& \cdots ,& 1 ,& 1 \\
                     \end{array}
                   \right)_{1\times n}.$$

We utilize the GADI, R1-ADI, R2-ADI, and R-GADI methods to solve the problem. Table 3 presents the relative residual, iteration count, and time obtained using these four iterative methods as the matrix dimension increases. By comparing these data, we can observe the effectiveness of the R-GADI method in solving the problem. Additionally, Fig. \ref{figure3} illustrates the correspondence between iteration count and relative residual for these four iterative methods when $n=1024$. Fig. \ref{figure4} displays the iteration time required by each method, it is evident that the R-GADI method achieves better results in solving the problem.

\begin{table}[htb]
\centering
\begin{tabular}{ccccc}
   \hline
   $n$ &algorithm & Res &IT &CPU\\ \hline
   128& GADI & 2.1884e-16 & 14 & 0.15s  \\
   128& R1-ADI  & 4.8137e-16 & 14 & 0.11s  \\
  128 & R2-ADI  & 3.0187e-16 & 12 & 0.12s  \\
   128& R-GADI  & 6.2135e-16 & 10 & 0.07s  \\ \hline
    256& GADI & 8.0851e-16 & 13 & 0.65s  \\
   256& R1-ADI  & 2.5974e-16 & 14 & 0.47s  \\
   256& R2-ADI  & 7.9983e-16 & 11 & 0.43s  \\
  256 & R-GADI  & 3.1158e-16 & 10 & 0.29s  \\ \hline
    512& GADI & 4.4317e-16 & 12 & 4.22s  \\
   512& R1-ADI  & 2.6518e-16 & 13 & 2.29s  \\
   512& R2-ADI  & 4.2145e-16 & 11 & 2.27s  \\
   512& R-GADI  & 2.0157e-16 & 10 & 1.48s  \\ \hline
    1024& GADI & 1.7196e-16  & 12 & 45.34s  \\
   1024& R1-ADI  & 3.5374e-16 & 12 & 26.63s  \\
   1024& R2-ADI  & 8.1825e-16 & 10 & 26.86s  \\
   1024& R-GADI  & 2.2622e-16 & 10 & 15.21s  \\ \hline
    2048& GADI & 6.0359e-16 & 12 & 490.81s  \\
   2048& R1-ADI  & 1.2585e-15 & 12 & 455.37s  \\
   2048& R2-ADI  & 1.0059e-15 & 10 & 450.4s  \\
   2048& R-GADI  & 6.6674e-16 & 9 & 234.92s  \\ \hline
    4096& GADI & 2.8412e-16 & 12 & 3998.2s  \\
   4096& R1-ADI  & 3.2722e-16 & 12 & 3225.4s  \\
   4096& R2-ADI  & 4.3943e-15 & 9 & 3025.7s  \\
   4096& R-GADI  & 2.983e-16 & 9 & 1495.60s  \\ \hline
 \end{tabular}
 \caption{Numerical results for Example 2.5.2\label{table3}}
 \end{table}
 \begin{figure}[H]
\centering
    \begin{minipage}[t]{0.49\textwidth}
        \centering
        \includegraphics[width=1.1\textwidth]{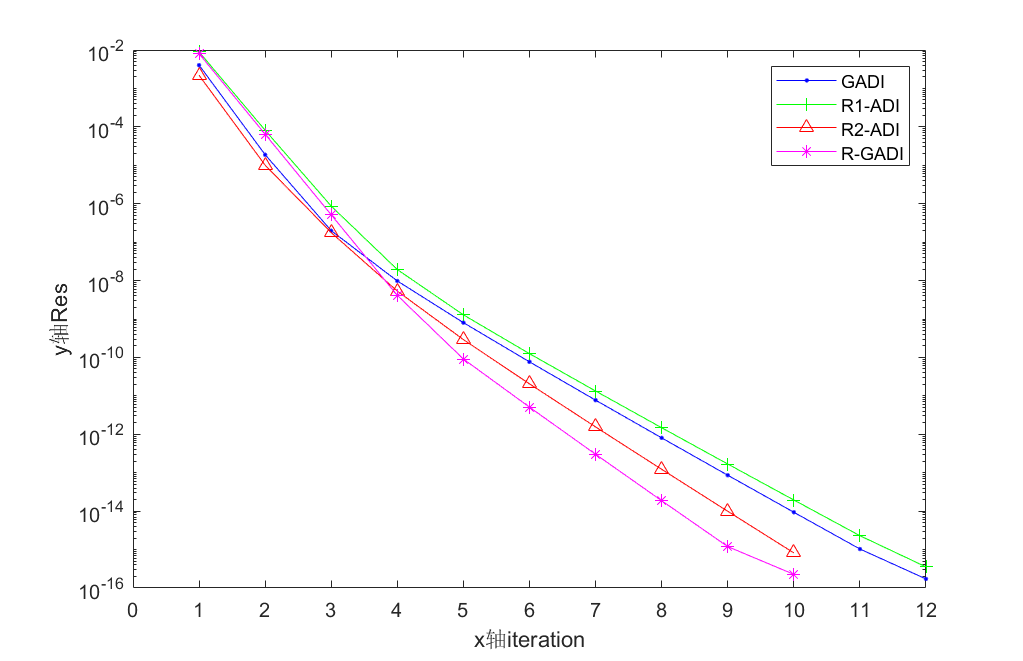}
        \caption{The residual curve of Example 2.5.2\label{figure3}~$n=1024$}
    \end{minipage}
    \begin{minipage}[t]{0.49\textwidth}
        \centering
        \includegraphics[width=1.1\textwidth]{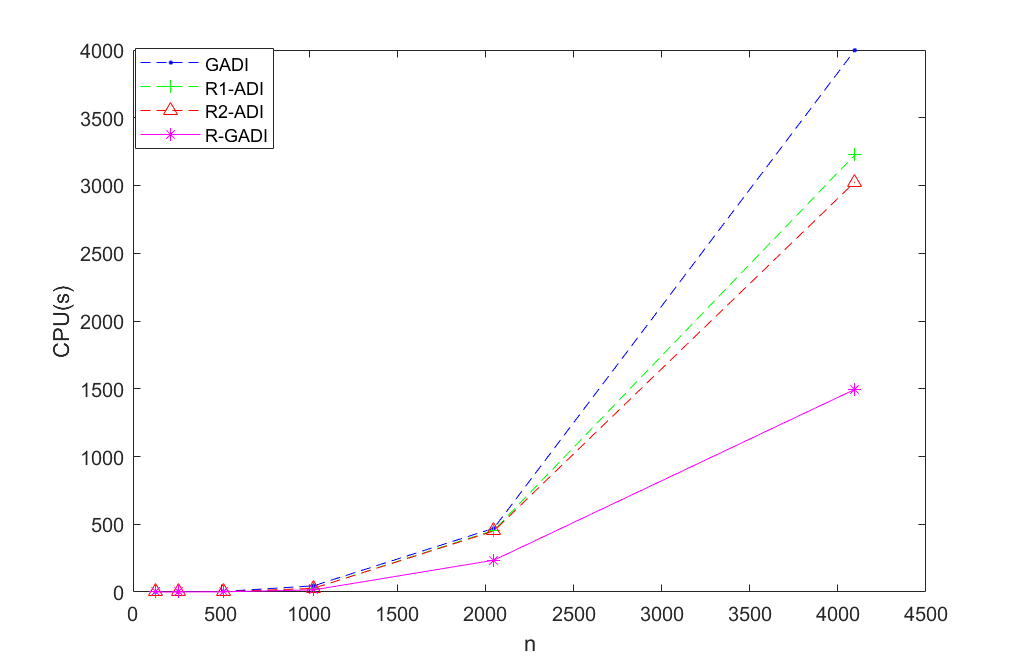}
        \caption{The time curve of Example 2.5.2\label{figure4}}
        \end{minipage}
\end{figure}
From the above Fig. \ref{figure3}, we can see that in the initial iteration phase, these four iterative methods exhibit relatively fast convergence. However, as the iterations progress, the convergence speed slows down. Upon reaching a certain number of iterations, it becomes evident that the R-GADI method demonstrates better convergence speed and requires the least amount of time.
\section{Low rank GADI for solving Riccati equation}
\label{sec:solve Riccati equation}
\subsection{Derivation of iterative format}
In this section, we investigate the conclusions related to solving the continuous algebraic Riccati equation \eqref{eq5} using the GADI method. The algebraic Riccati equation has its origins in numerical problems in control theory and finds wide applications, particularly in the design of quadratic optimal control. For certain linear quadratic optimization control problems, the problem can eventually be transformed into solving the algebraic Riccati equation for a stable solution. Generally, the solution of the algebraic Riccati equation \eqref{eq5} is not unique. Therefore, we first provide a sufficient condition for the existence of a unique solution.
\begin{lemma}\label{lemma4}\cite{ref24}
If the matrix pair $(A, B)$ in the algebraic Riccati equation \eqref{eq5} is stabilizable, meaning that for any $\lambda$ such that Re$\lambda\geq0$, the matrix $[A-\lambda I\ B]$ has full row rank, and if the matrix pair $(C,\ A)$ is detectable, meaning that for any $\lambda$ and $x$ such that Re$\lambda\geq0$ and $Ax=\lambda x$, we have $Cx\neq0$, then equation \eqref{eq5} has a unique positive semi-definite solution, and this solution is stable.
\end{lemma}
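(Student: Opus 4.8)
The plan is to prove the lemma via the Hamiltonian matrix attached to \eqref{eq5},
\[
\mathcal{H}=\begin{pmatrix} A & -G \\ -Q & -A^{T}\end{pmatrix}\in\mathbb{R}^{2n\times 2n},
\]
along the classical invariant-subspace route (a monotone Kleinman--Newton iteration would also work, but this argument is the most self-contained). The first step is to show $\mathcal{H}$ has no eigenvalue on the imaginary axis. Assume $\mathcal{H}\binom{u}{v}=i\omega\binom{u}{v}$ with $\omega\in\mathbb{R}$; writing the two block rows, left-multiplying the first by $v^{*}$ and the second by $u^{*}$, and combining the two resulting scalar identities (one of them conjugated) gives $u^{*}Qu+v^{*}Gv=0$. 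Because $Q=C^{T}C\ge 0$ and $G=BB^{T}\ge 0$, this forces $Cu=0$ and $B^{T}v=0$, hence $Gv=0$ and $Au=i\omega u$; detectability of $(C,A)$ then makes $u=0$, after which $A^{T}v=-i\omega v$ together with $B^{T}v=0$ yields $v^{*}(A-i\omega I)=0$ and $v^{*}B=0$, so stabilizability of $(A,B)$ makes $v=0$. Thus $\mathcal{H}$ has no purely imaginary eigenvalue, and by its Hamiltonian spectral symmetry ($\lambda\in\Lambda(\mathcal{H})\Rightarrow-\lambda\in\Lambda(\mathcal{H})$) it has exactly $n$ eigenvalues in the open left half-plane.

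Next I would pick a real basis $\binom{U_{1}}{U_{2}}\in\mathbb{R}^{2n\times n}$ of the stable $\mathcal{H}$-invariant subspace (real because $\mathcal{H}$ is), so $\mathcal{H}\binom{U_{1}}{U_{2}}=\binom{U_{1}}{U_{2}}S$ with all eigenvalues of $S$ in the open left half-plane, and prove $U_{1}$ is nonsingular. The key auxiliary fact is the Lagrangian identity $U_{1}^{T}U_{2}=U_{2}^{T}U_{1}$: the skew-symmetric matrix $P:=U_{1}^{T}U_{2}-U_{2}^{T}U_{1}$ satisfies $S^{T}P+PS=0$ because $J\mathcal{H}$ is symmetric (with $J$ the standard $2n\times 2n$ symplectic matrix), and this Sylvester equation has only the zero solution since the spectra of $S^{T}$ and $S$ both lie in the open left half-plane. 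Then, if $U_{1}z=0$, the first block row gives $-GU_{2}z=U_{1}Sz$; left-multiplying by $z^{T}U_{2}^{T}$ and using the Lagrangian identity gives $\|B^{T}U_{2}z\|_{2}^{2}=0$, whence $GU_{2}z=0$, $U_{1}Sz=0$, so $\ker U_{1}$ is $S$-invariant. If $\ker U_{1}\ne\{0\}$, choose there an eigenvector $z$ with $Sz=\mu z$, $\operatorname{Re}\mu<0$; the second block row applied to $z$ gives $A^{T}(U_{2}z)=-\mu(U_{2}z)$ with $w:=U_{2}z\ne 0$ (since $U_{1}z=0$ and the columns of $\binom{U_{1}}{U_{2}}$ are linearly independent), so $w^{*}(A+\bar\mu I)=0$ and $w^{*}B=0$ with $\operatorname{Re}(-\bar\mu)>0$, contradicting stabilizability. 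Hence $U_{1}$ is invertible, and I set $X:=U_{2}U_{1}^{-1}$.

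The rest is routine. Substituting $U_{2}=XU_{1}$ into the block rows of $\mathcal{H}\binom{U_{1}}{U_{2}}=\binom{U_{1}}{U_{2}}S$ and eliminating $S=U_{1}^{-1}(A-GX)U_{1}$ shows $X$ solves \eqref{eq5}; the Lagrangian identity gives $X^{T}=X$ (and $X$ is real); and the similarity $S=U_{1}^{-1}(A-GX)U_{1}$ shows $A-GX$ is stable. Rewriting \eqref{eq5} as $(A-GX)^{T}X+X(A-GX)=-(C^{T}C+XGX)$ and using stability of $A-GX$ yields $X=\int_{0}^{\infty}e^{(A-GX)^{T}t}\bigl(C^{T}C+XGX\bigr)e^{(A-GX)t}\,dt\ge 0$, so a positive semi-definite stabilizing solution exists. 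For uniqueness I would first observe that any positive semi-definite solution $\widetilde{X}$ is automatically stabilizing: an eigenpair $(\lambda,v)$ of $A-G\widetilde{X}$ with $\operatorname{Re}\lambda\ge 0$ would, through the Lyapunov form of \eqref{eq5} and $\widetilde{X}\ge 0$, force $Cv=0$ and $G\widetilde{X}v=0$, hence $Av=\lambda v$ with $Cv=0$, contradicting detectability. Then for two positive semi-definite (hence stabilizing) solutions $X_{1},X_{2}$, subtracting their Riccati equations gives $(A-GX_{1})^{T}(X_{1}-X_{2})+(X_{1}-X_{2})(A-GX_{2})=0$, a Sylvester equation whose two coefficient spectra both lie in the open left half-plane, so $X_{1}=X_{2}$; the unique positive semi-definite solution is therefore the Hamiltonian one and is stable.

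The step I expect to be the main obstacle is the nonsingularity of $U_{1}$: this is where stabilizability and detectability must be coupled through the invariant-subspace structure, and it leans on the Lagrangian identity, whose proof in turn requires a Sylvester-uniqueness argument. By comparison, emptiness of the imaginary spectrum (before it) and the Riccati verification, positivity, and uniqueness (after it) are comparatively mechanical.
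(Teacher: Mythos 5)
The paper does not actually prove this lemma: it is quoted verbatim from the textbook \cite{ref24} and used as a black box, so there is no in-paper argument to compare against. Your Hamiltonian invariant-subspace proof is the classical route to this result and, as far as I can check, it is correct and complete at the level of detail given: the elimination $u^{*}Qu+v^{*}Gv=0$ ruling out imaginary eigenvalues, the Lagrangian identity $U_{1}^{T}U_{2}=U_{2}^{T}U_{1}$ via the Sylvester equation $S^{T}P+PS=0$, the $S$-invariance of $\ker U_{1}$ and the resulting contradiction with stabilizability, and the observation that every positive semi-definite solution is automatically stabilizing (which is the step that upgrades ``unique stabilizing solution'' to ``unique positive semi-definite solution'' and lets the final Sylvester argument close the uniqueness claim) all check out. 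You correctly identified the nonsingularity of $U_{1}$ as the load-bearing step. The only remark worth making is that the alternative monotone Kleinman--Newton route you mention in passing would sit more naturally with the rest of this paper, since Theorem \ref{theorem5} and the iteration \eqref{eq15}--\eqref{eq17} are built on exactly that machinery; your invariant-subspace argument buys self-containedness at the price of introducing the Hamiltonian structure, which the paper otherwise never uses.
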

Firstly, we consider the Newton iteration method:
\begin{itemize}
\item Define mapping $F:\mathbb R^{n\times n}\rightarrow\mathbb R^{n\times n}$ is $$F(X)=A^{T}X+XA+Q-XGX, \ \ X\in\mathbb R^{n\times n},$$ then we have
\begin{align*}
F(X+E)&=A^{T}(X+E)+(X+E)A+Q-(X+E)G(X+E)\\
&=A^{T}X+XA-XGX+Q+A^{T}E+EA-EGX-XGE-EGE\\
&=F(X)+L(X)+O(\|E\|_{2}^{2}) \ (E\rightarrow0),
\end{align*}
where
\begin{align*}
L(E)&=A^{T}E+EA-EGX-XGE\\
&=(A-GX)^{T}E+E(A-GX),
\end{align*}
it is a linear operator. From the definition of Fr\'{e}chet differentiability, it follows that for any $X\in\mathbb R^{n\times n},\ F$ is Fr\'{e}chet differentiable at $X$. There is $$F^{\prime}(X)(E)=(A-GX)^{\ast}E+E(A-GX),\ E\in\mathbb R^{n\times n}.$$
\end{itemize}
Thus, by applying Newton's method $F^{\prime}(X_{k})(E_{k})=-F(X_{k})$, we can get the iterative format of the Newton iteration method for solving equation \eqref{eq5}.
\begin{itemize}
\item[(a)] Seeking $E_{k}$ that satisfies the equation
\begin{equation}\label{eq14}
  (A-GX_{k})^{T}E_{k}+E_{k}(A-GX_{k})=-F(X_{k}).
\end{equation}
\item[(b)] Calculate $X_{k+1}=X_{k}+E_{k}$, where $X_{0}\in\mathbb R^{n\times n}$ is the initial matrix. Rearranging equation \eqref{eq14} and using $X_{k+1}=X_{k}+E_{k}$, then we can obtain the equation satisfied by $X_{k+1}$
\end{itemize}
\begin{equation}\label{eq15}
  (A-GX_{k})^{T}X_{k+1}+X_{k+1}(A-GX_{k})+X_{k}GX_{k}+Q=0.
\end{equation}
\begin{itemize}
\item Next, we apply the Kleinman-Newton method\cite{ref25} with an initial feedback matrix $K_{0}=0$. Let $K_{k}=X_{k}B$ to obtain another equivalent form of equation \eqref{eq15}.
\begin{equation}\label{eq016}
 (A-BK_{k}^{T})^{T}X_{k+1}+X_{k+1}(A-BK_{k}^{T})=-K_{k}K_{k}^{T}-C^{T}C,
\end{equation}
with $A_{k}=BK_{k}^{T}-A,\ M_{k}=[K_{k},\ C^{T}].$ Therefore, equation \eqref{eq016} can be rewritten as the following Lyapunov equation form
\begin{equation}\label{eq17}
 A_{k}^{T}X_{k+1}+X_{k+1}A_{k}=M_{k}M_{k}^{T}.
\end{equation}
\end{itemize}
\begin{itemize}
\item In addition, we apply the GADI framework to solve the Lyapunov equation \eqref{eq17} and obtain an iterative format
\begin{equation}\label{eq18}
\left\{
\begin{array}{ll}
(\alpha_{k}I+A_{k}^{T})X_{k+1}^{(l+\frac{1}{2})}=X_{k+1}^{(l)}(\alpha_{k}I-A_{k})+Q_{k},\\
X_{k+1}^{(l+1)}(\alpha_{k}I+A_{k})=X_{k+1}^{(l)}(A_{k}-(1-\omega_{k})\alpha_{k}I)+(2-\omega_{k})\alpha_{k}X_{k+1}^{(l+\frac{1}{2})},
\end{array}
\right.
\end{equation}
where $$Q_{k}=M_{k}M_{k}^{T},\ \ l=0,1,\cdots,s,$$
$$X_{k+1}^{(0)}=X_{k},\ \ X_{k+1}=X_{k+1}^{(s)},\ \ \alpha_{k}>0,\ \ 0\leq\omega_{k}<2.$$
\end{itemize}
Next, we further manipulate the iterative scheme \eqref{eq18} and obtain the following expression under the assumption that $\alpha_{k}I+A_{k}^{T}$ and $\alpha_{k}I+A_{k}$ are non-singular.
\begin{align*}
X_{k+1}^{(l+1)}&=X_{k+1}^{(l)}(A_{k}-(1-\omega_{k})\alpha_{k}I)(\alpha_{k}I+A_{k})^{-1}\\
&+(2-\omega_{k})\alpha_{k}(\alpha_{k}I+A_{k}^{T})^{-1}X_{k+1}^{(l)}(\alpha_{k}I-A_{k})(\alpha_{k}I+A_{k})^{-1}\\
&+(2-\omega_{k})\alpha_{k}(\alpha_{k}I+A_{k}^{T})^{-1}Q_{k}(\alpha_{k}I+A_{k})^{-1}.
\end{align*}
Due to $X_{k+1}^{(0)}=X_{k}$, then we have
\begin{align*}
X_{k+1}^{(1)}&=X_{k}(A_{k}-(1-\omega_{k})\alpha_{k}I)(\alpha_{k}I+A_{k})^{-1}\\
&+(2-\omega_{k})\alpha_{k}(\alpha_{k}I+A_{k}^{T})^{-1}X_{k}(\alpha_{k}I-A_{k})(\alpha_{k}I+A_{k})^{-1}\\
&+(2-\omega_{k})\alpha_{k}(\alpha_{k}I+A_{k}^{T})^{-1}Q_{k}(\alpha_{k}I+A_{k})^{-1}.
\end{align*}
Let the initial value of iteration $X_ {0}=0 $, we get
$$X_{1}^{(1)}=(2-\omega_{0})\alpha_{0}(\alpha_{0}I+A_{0}^{T})^{-1}Q_{0}(\alpha_{0}I+A_{0})^{-1}=V_{1}^{(1)}(W_{1}^{(1)})^{T},$$
where \begin{align*}
V_{1}^{(1)}&=\sqrt{(2-\omega_{0})\alpha_{0}}(\alpha_{0}I+A_{0}^{T})^{-1}M_{0}\\
&=\sqrt{(2-\omega_{0})\alpha_{0}}(\alpha_{0}I-A^{T})^{-1}C^{T},\\
&W_{1}^{(1)}=V_{1}^{(1)}.
\end{align*}
Therefore, we provide a low rank Kleinman Newton GADI iterative scheme for the corresponding Riccati equation.
\begin{equation}\label{eq19}
\left\{
\begin{array}{lllllllllll}
A_{k}=BK_{k}^{T}-A,\ M_{k}=[K_{k},\ C^{T}],\ \ Q_{k}=M_{k}M_{k}^{T},\\
V_{1}^{(1)}=\sqrt{(2-\omega_{0})\alpha_{0}}(\alpha_{0}I-A^{T})^{-1}C^{T},\\
V_{k+1}^{(1)}=\sqrt{(2-\omega_{k})\alpha_{k}}(\alpha_{k}I+A_{k}^{T})^{-1}M_{k},\\
V_{k+1}^{(l)}=[V_{k+1}^{(l-1)},\sqrt{(2-\omega_{k})\alpha_{k}}(\alpha_{k}I+A_{k}^{T})^{-1}V_{k+1}^{(l-1)},V_{k+1}^{(1)}],\\
W_{1}^{(1)}=V_{1}^{(1)},\\
W_{k+1}^{(1)}=\sqrt{(2-\omega_{k})\alpha_{k}}(\alpha_{k}I+A_{k}^{T})^{-1}M_{k},\\
W_{k+1}^{(l)}=[(\alpha_{k}I+A_{k}^{T})^{-1}(A_{k}^{T}-(1-\omega_{k})\alpha_{k}I)W_{k+1}^{(l-1)},\\
\ \ \ \ \ \ \ \ \ \sqrt{(2-\omega_{k})\alpha_{k}}(\alpha_{k}I+A_{k}^{T})^{-1}(\alpha_{k}I-A_{k}^{T})W_{k+1}^{(l-1)},W_{k+1}^{(1)}],\\
X_{k+1}^{(l)}=V_{k+1}^{(l)}(W_{k+1}^{(l)})^{T},\\
K_{k+1}= V_{k+1}W_{k+1}^{T}B.
\end{array}
\right.
\end{equation}
The Algorithm \Ref{algorithm2} process is as follows:
\floatname{algorithm}{Algorithm}
\renewcommand{\algorithmicrequire}{\textbf{Input:}}
\renewcommand{\algorithmicensure}{\textbf{Output:}}
\begin{algorithm}
  \caption{Low rank GADI iteration for solving Lyapunov equation \eqref{eq17}}
  \label{algorithm2}
  \begin{algorithmic}[1]
    \REQUIRE Matrix $A,\ B,\ C$, initial feedback matrix $K_{0}$ makes $BK_{0}^{T}-A$ is stable, $k_{max}$;
    \ENSURE  Approximation $X_{k+1}\approx V_{k+1}W_{k+1}^{T}$ for the solution of the equation $A_{k}^{T}X_{k+1}+X_{k+1}A_{k}=Q_{k}$.
    \FOR {$k=1,\cdots,k_{max}$}
    \STATE $A_{k}=BK_{k}^{T}-A;$
    \STATE $M_{k}=[K_{k},\ C^{T}];\ Q_{k}=M_{k}M_{k}^{T};$
    \STATE $V_{k}^{(1)}=\sqrt{(2-\omega_{k-1})\alpha_{k-1}}(\alpha_{k-1}I+A_{k-1}^{T})^{-1}M_{k-1};$
    \STATE $W_{k}^{(1)}=\sqrt{(2-\omega_{k-1})\alpha_{k-1}}(\alpha_{k-1}I+A_{k-1}^{T})^{-1}M_{k-1};$
           \FOR {$l=2,3,\cdots, i_{k}^{max}$}
                \STATE $V_{k}^{(l)}=[V_{k}^{(l-1)},\ \ \sqrt{(2-\omega_{k-1})\alpha_{k-1}}(\alpha_{k-1}I+A_{k-1}^{T})^{-1}V_{k}^{(l-1)},\ \ V_{k}^{(1)}];$
                \STATE $W_{k}^{(l)}=[(\alpha_{k-1}I+A_{k-1}^{T})^{-1}(A_{k-1}^{T}-(1-\omega_{k-1})\alpha_{k-1}I)W_{k}^{(l-1)},\ $ $\qquad\sqrt{(2-\omega_{k-1})\alpha_{k-1}}(\alpha_{k-1}I+A_{k-1}^{T})^{-1}(\alpha_{k-1}I-A_{k-1}^{T})W_{k}^{(l-1)},\quad W_{k}^{(1)}];$
                \STATE $X_{k}^{(l)}=V_{k}^{(l)}(W_{k}^{(l)})^{T};$
           \ENDFOR
           \STATE $V_{k+1}=V_{k+1}^{(i_{k}^{max})},\ W_{k+1}=W_{k+1}^{(i_{k}^{max})}$;
           \STATE $K_{k+1}= V_{k+1}W_{k+1}^{T}B$;
    \ENDFOR
   \end{algorithmic}
\end{algorithm}
\floatname{algorithm}{Algorithm}
\renewcommand{\algorithmicrequire}{\textbf{Input:}}
\renewcommand{\algorithmicensure}{\textbf{Output:}}
\begin{algorithm} 
  \caption{A low rank Kleinman-Newton GADI method for solving Riccati equation \eqref{eq5}}
\label{algorithm3}
  \begin{algorithmic}[1]
    \REQUIRE Matrix $A,\ B,\ C$, initial feedback matrix $K_{0}$, loop variables $k=1,\cdots,i_{k}^{max}$, and relative residual precision $\varepsilon$;
    \ENSURE $X=X_{k+1}^{(i_{k}^{max})}$ make $V_{k+1}^{(i_{k}^{max})}(W_{k+1}^{(i_{k}^{max})})^{T}\approx X_{k+1}^{(i_{k}^{max})}$, where $X$ is the solution of equation $A^{T}X+XA-XBB^{T}X+C^{T}C=0$.
    \FOR {$k=1,\cdots,i_{k}^{max}$}
    \STATE $A_{k}=BK_{k}^{T}-A;$
    \STATE $M_{k}=[K_{k},\ C^{T}];\ Q_{k}=M_{k}M_{k}^{T};$
    \STATE Using Algorithm \ref{algorithm2} for low rank GADI to find $X_{k+1}\approx V_{k+1}W_{k+1}^{T}$ is the approximate solution of the Lyapunov equation $A_{k}^{T}X_{k+1}+X_{k+1}A_{k}=Q_{k}$.
    \STATE $K_{k+1}=V_{k+1}W_{k+1}^{T}B;$
    \STATE Compute Res$(V_{k+1}W_{k+1}^{T})=\frac{\|A_{k}^{T}V_{k+1}W_{k+1}^{T}+V_{k+1}W_{k+1}^{T}A_{k}-Q_{k}\|_{2}}{\|Q_{k}\|_{2}};$
               \IF {Res$(V_{k+1}W_{k+1}^{T})<\varepsilon$}
               \STATE  stop;
               \ENDIF
    \ENDFOR
  \end{algorithmic}
\end{algorithm}
\subsection{Convergence analysis}
Next, we provide the convergence results of using the Newton method to solve equation \eqref{eq15}.

\begin{theorem}\label{theorem5}\cite{ref24}
 For the algebraic Riccati equation \eqref{eq5}, assuming $(A,\ G)$ is stable and $(Q,\ A)$ is detectable, and selecting a symmetric positive semi-definite matrix $X_ {0}$ such the $A-GX_ {0}$ is stable, then the matrix sequence $\{X_ {k}\}$ generated by the Newton iteration converges quadratically to the unique positive semi-definite solution $X$ of \eqref{eq5}. In other words, there exists a constant $\delta>0$ independent of $k$, such that for all positive integers $$\|X_{k}-X\|_{2}\leq\delta\|X_{k-1}-X\|_{2}^{2}.$$ Furthermore, the iteration sequence $\{X_{k}\}$ exhibits monotonic convergence, i.e., $$0\leq X\leq\cdots\leq X_{k+1}\leq X_{k}\leq\cdots\leq X_{1}.$$
\end{theorem}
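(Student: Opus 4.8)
The plan is to follow the classical Kleinman--Newton monotone-convergence argument. Throughout write $F(Y)=A^{T}Y+YA+Q-YGY$ for the Riccati operator, so that \eqref{eq5} is $F(X)=0$, and set $A_{k}:=A-GX_{k}$; then the Newton step \eqref{eq15} (equivalently \eqref{eq17}) is the Lyapunov equation
$$A_{k}^{T}X_{k+1}+X_{k+1}A_{k}=-(X_{k}GX_{k}+Q).$$
By Lemma \ref{lemma4} the unique symmetric positive semidefinite solution $X$ exists and $A-GX$ is stable, and I will use $X$ freely. The whole statement reduces to one induction on $k$: \emph{$A_{k}$ is stable, and for $k\ge1$ one has $X\le X_{k+1}\le X_{k}$.}

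The first step is to record two algebraic identities obtained by differencing Lyapunov equations. Rewriting $F(X)=0$ as $A_{k}^{T}X+XA_{k}=-Q+XGX-X_{k}GX-XGX_{k}$ and subtracting it from the displayed equation for $X_{k+1}$ gives
$$A_{k}^{T}(X_{k+1}-X)+(X_{k+1}-X)A_{k}=-(X_{k}-X)G(X_{k}-X),$$
while differencing the equations for $X_{k+1}$ and $X_{k}$ (the latter having coefficient $A_{k-1}$) gives, for $k\ge1$,
$$A_{k}^{T}(X_{k+1}-X_{k})+(X_{k+1}-X_{k})A_{k}=-F(X_{k}),\qquad F(X_{k})=-(X_{k}-X_{k-1})G(X_{k}-X_{k-1}).$$
Since $G=BB^{T}\ge0$, the right-hand side of the first identity is negative semidefinite and $-F(X_{k})$ is positive semidefinite.

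Now run the induction. The base case $A_{0}$ stable is the hypothesis. Assuming $A_{k}$ stable, the integral representation of the solution of a stable Lyapunov equation applied to the two identities yields $X_{k+1}-X=\int_{0}^{\infty}e^{A_{k}^{T}t}(X_{k}-X)G(X_{k}-X)e^{A_{k}t}\,dt\ge0$, so $X_{k+1}\ge X\ge0$, and (for $k\ge1$) $X_{k+1}-X_{k}=-\int_{0}^{\infty}e^{A_{k}^{T}t}(X_{k}-X_{k-1})G(X_{k}-X_{k-1})e^{A_{k}t}\,dt\le0$, so $X_{k+1}\le X_{k}$. To close the induction I must show $A_{k+1}=A-GX_{k+1}$ is stable. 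Inserting $A_{k+1}$ into the Newton equation and simplifying gives
$$A_{k+1}^{T}X_{k+1}+X_{k+1}A_{k+1}=-\bigl[(X_{k}-X_{k+1})G(X_{k}-X_{k+1})+X_{k+1}GX_{k+1}+Q\bigr]=:-\widehat{W}_{k+1},$$
with $\widehat{W}_{k+1}\ge Q=C^{T}C\ge0$ and $X_{k+1}\ge0$. If $A_{k+1}v=\lambda v$ with $\mathrm{Re}(\lambda)\ge0$, pairing with $v^{*}$ gives $2\,\mathrm{Re}(\lambda)\,v^{*}X_{k+1}v=-v^{*}\widehat{W}_{k+1}v$, whose left side is nonnegative and right side nonpositive; hence $v^{*}\widehat{W}_{k+1}v=0$, and because $\widehat{W}_{k+1}$ is a sum of positive semidefinite terms this forces $Cv=0$ and $GX_{k+1}v=0$, so $Av=(A-GX_{k+1})v=\lambda v$ with $Cv=0$ and $\mathrm{Re}(\lambda)\ge0$, contradicting detectability of $(C,A)$. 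Hence $A_{k+1}$ is stable, the induction closes, and $0\le X\le\cdots\le X_{k+1}\le X_{k}\le\cdots\le X_{1}$.

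Finally, $\{X_{k}\}_{k\ge1}$ is nonincreasing in the Loewner order and bounded below by $X$, hence converges to some $X_{*}\ge X$; passing to the limit in the Newton equation gives $F(X_{*})=0$, so $X_{*}=X$ by the uniqueness in Lemma \ref{lemma4}, which is the asserted convergence. For the quadratic rate the first identity gives $\|X_{k+1}-X\|_{2}\le\|G\|_{2}\bigl(\int_{0}^{\infty}\|e^{A_{k}t}\|_{2}^{2}\,dt\bigr)\|X_{k}-X\|_{2}^{2}$, and since every $A_{k}$ is stable with $A_{k}\to A-GX$, one has $\int_{0}^{\infty}\|e^{A_{k}t}\|_{2}^{2}\,dt\le\mathrm{tr}(P_{k})$ where $A_{k}^{T}P_{k}+P_{k}A_{k}=-I$, and $P_{k}$ depends continuously on $A_{k}$ on the compact set $\{A_{j}:j\ge0\}\cup\{A-GX\}$, so $\delta:=\|G\|_{2}\sup_{k}\mathrm{tr}(P_{k})<\infty$ is independent of $k$. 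I expect the stability step of the induction to be the main obstacle: one has to identify the Lyapunov equation governing $X_{k+1}$ whose coefficient is $A_{k+1}$, observe that its right-hand side dominates $-C^{T}C$, and crucially notice that on the null space of that right-hand side the feedback term $GX_{k+1}$ drops out, which is what allows detectability of the \emph{original} pair $(C,A)$ to be invoked; the difference identities and the convergence estimates are then routine computations with stable Lyapunov equations.
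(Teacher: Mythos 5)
The paper does not prove this theorem at all: it is stated with a citation to \cite{ref24} and used as a black box, so there is no internal proof to compare against. Your argument is a correct and essentially complete rendition of the classical Kleinman monotone-convergence proof, and all the key steps check out: the two difference identities are algebraically right (I verified $A_{k}^{T}(X_{k+1}-X)+(X_{k+1}-X)A_{k}=-(X_{k}-X)G(X_{k}-X)$ and $A_{k}^{T}(X_{k+1}-X_{k})+(X_{k+1}-X_{k})A_{k}=-F(X_{k})$ with $F(X_{k})=-(X_{k}-X_{k-1})G(X_{k}-X_{k-1})$ directly); the Lyapunov equation satisfied by $X_{k+1}$ with coefficient $A_{k+1}$ and right-hand side $-\widehat{W}_{k+1}$ is correct; and the eigenvector argument for stability of $A_{k+1}$ properly reduces to detectability of $(C,A)$, which is equivalent to the stated detectability of $(Q,A)$ since $Q=C^{T}C$. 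The ordering of the induction is also sound: positivity $X_{k+1}\ge X\ge 0$ is obtained from the first identity before it is needed in the stability step, and monotonicity is only claimed from $k\ge 1$ on, matching the chain $X\le\cdots\le X_{k+1}\le X_{k}\le\cdots\le X_{1}$ in the statement. The uniform bound $\delta=\|G\|_{2}\sup_{k}\mathrm{tr}(P_{k})$ via continuity of the Lyapunov solution on the compact set $\{A_{j}\}\cup\{A-GX\}$ is the standard way to get a $k$-independent constant for the quadratic rate. The only cosmetic caveat is that the theorem's hypothesis ``$(A,G)$ is stable'' should be read as stabilizability of $(A,B)$ (consistent with Lemma \ref{lemma4}), which you implicitly do when invoking existence and uniqueness of the stabilizing solution $X$.
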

Let $$R(X_{k+1})=A_{k}^{T}X_{k+1}+X_{k+1}A_{k}-M_{k}M_{k}^{T}$$ be the residual matrix of the Lyapunov equation \eqref{eq17}, then we have the following proposition.
\begin{proposition}\label{proposition1}
Let $X_{k}$ be the $k$-th step iteration generated by the Kleinman-Newton method as described above, then$$F(X_{k+1})=-R(X_{k+1})-(X_{k}B-X_{k+1}B)(X_{k}B-X_{k+1}B)^{T}.$$
\end{proposition}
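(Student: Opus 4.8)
The claim is an algebraic identity, so the plan is to expand $F(X_{k+1})$ from its definition and steadily rewrite every piece in terms of the quantities appearing in the Lyapunov equation \eqref{eq17}. The key bookkeeping observation is that, since $K_{k}=X_{k}B$ with $X_{k}$ symmetric, we have $BK_{k}^{T}=BB^{T}X_{k}=GX_{k}$, hence $A_{k}=GX_{k}-A$, i.e. $A=GX_{k}-A_{k}$ and $A^{T}=X_{k}G-A_{k}^{T}$. Substituting these into the linear part of $F(X_{k+1})=A^{T}X_{k+1}+X_{k+1}A+Q-X_{k+1}GX_{k+1}$ gives
\begin{align*}
A^{T}X_{k+1}+X_{k+1}A &= (X_{k}G-A_{k}^{T})X_{k+1}+X_{k+1}(GX_{k}-A_{k})\\
&= -(A_{k}^{T}X_{k+1}+X_{k+1}A_{k})+X_{k}GX_{k+1}+X_{k+1}GX_{k}.
\end{align*}

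Next I would eliminate $A_{k}^{T}X_{k+1}+X_{k+1}A_{k}$ using the definition of the residual, $A_{k}^{T}X_{k+1}+X_{k+1}A_{k}=R(X_{k+1})+M_{k}M_{k}^{T}$, together with the identity $M_{k}M_{k}^{T}=K_{k}K_{k}^{T}+C^{T}C=X_{k}GX_{k}+Q$, which follows from $M_{k}=[K_{k},\ C^{T}]$, $K_{k}=X_{k}B$ and $G=BB^{T}$. Feeding this into the expression above and then adding $Q-X_{k+1}GX_{k+1}$, the two copies of $Q$ cancel and one obtains
\begin{align*}
F(X_{k+1}) &= -R(X_{k+1})-X_{k}GX_{k}+X_{k}GX_{k+1}+X_{k+1}GX_{k}-X_{k+1}GX_{k+1}.
\end{align*}

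The last four terms are a completed square: they equal $-(X_{k}-X_{k+1})G(X_{k}-X_{k+1})$, and since $G=BB^{T}$ and $X_{k}, X_{k+1}$ are symmetric this factors as $(X_{k}B-X_{k+1}B)(X_{k}B-X_{k+1}B)^{T}$, giving the stated formula. There is no genuine obstacle here — the whole argument is a short chain of substitutions; the only points requiring care are tracking the transposes correctly (using symmetry of $X_{k}$ so that $X_{k}G$ and $GX_{k}$ are mutual transposes, and that $X_{k}-X_{k+1}$ is symmetric), and recognizing the completed square so that the cross terms can be absorbed cleanly into the rank-structured correction through $B$.
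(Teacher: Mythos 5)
Your proposal is correct and follows essentially the same route as the paper: both proofs reduce to the same substitutions $K_{k}=X_{k}B$, $M_{k}M_{k}^{T}=X_{k}GX_{k}+Q$, and the completed square $(X_{k}-X_{k+1})G(X_{k}-X_{k+1})=(X_{k}B-X_{k+1}B)(X_{k}B-X_{k+1}B)^{T}$; the only difference is that you expand $F(X_{k+1})$ toward $R(X_{k+1})$ while the paper expands $R(X_{k+1})$ toward $F(X_{k+1})$. Your version is arguably cleaner in that the two copies of $Q=C^{T}C$ cancel explicitly, whereas the paper's displayed chain carries a stray $-C^{T}C$ through an intermediate line.
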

\begin{proof}
From the residual equation defined above, we can obtain that
\begin{align*}
R(X_{k+1})&=A_{k}^{T}X_{k+1}+X_{k+1}A_{k}-M_{k}M_{k}^{T}\\
&=(K_{k}B^{T}-A^{T})X_{k+1}+X_{k+1}(BK_{k}^{T}-A)-K_{k}K_{k}^{T}-C^{T}C\\
&=K_{k}B^{T}X_{k+1}-A^{T}X_{k+1}+X_{k+1}BK_{k}^{T}-X_{k+1}A-K_{k}K_{k}^{T}-C^{T}C\\
&=-F(X_{k+1})+K_{k}B^{T}X_{k+1}+X_{k+1}BK_{k}^{T}-X_{k+1}BB^{T}X_{k+1}-K_{k}K_{k}^{T}-C^{T}C\\
&=-F(X_{k+1})+X_{k}BB^{T}X_{k+1}+X_{k+1}BB^{T}X_{k}-X_{k+1}BB^{T}X_{k+1}-X_{k}BB^{T}X_{k}-C^{T}C\\
&=-F(X_{k+1})-(X_{k}-X_{k+1})BB^{T}(X_{k}-X_{k+1})\\
&=-F(X_{k+1})-(X_{k}B-X_{k+1}B)(X_{k}B-X_{k+1}B)^{T}.
\end{align*}
Therefore, from Proposition \ref{proposition1} we can directly prove
\begin{align*}
\|F(X_{k+1})\|_{2}&\leq\|R(X_{k+1})\|_{2}+\|(X_{k}B-X_{k+1}B)(X_{k}B-X_{k+1}B)^{T}\|_{2}\\
&\leq\|R(X_{k+1})\|_{2}+\|X_{k}B-X_{k+1}B\|_{2}^{2}.
\end{align*}
\end{proof}
\begin{theorem}\label{theorem6}
Let $X_{k+1}$ is a symmetric positive semi-definite  of the equation \eqref{eq17}, and let $(A_{k},\ M_{k})$ is stable, and parameter $\alpha>0,\ 0\leq\omega<2$, then for all $k=0,1,\cdots$, the low-rank form defined in equation \eqref{eq19} $V_{k+1}^{(l)}(W_{k+1}^{(l)})^{T}$ converges to $V_{k+1}W_{k+1}^{T}$, which is equivalent to the iteration sequence defined in equation \eqref{eq18} $\{X_{k+1}^{(l)}\}_{l=0}^{\infty}$ converges to $X_{k+1}$.
\end{theorem}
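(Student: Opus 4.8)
The plan is to reduce the assertion about the low-rank factors to the convergence of the full (unfactored) GADI iteration \eqref{eq18} for the \emph{fixed} Lyapunov equation \eqref{eq17}, and then reuse the argument of Theorem \ref{theorem2} almost verbatim with $F$ replaced by $A_k$ and $C^{T}C$ by $Q_k = M_k M_k^{T}$. First I would observe that, by the very construction of the recursion \eqref{eq19} (carried out exactly as in the derivation preceding \eqref{eq13}), one has $X_{k+1}^{(l)} = V_{k+1}^{(l)}(W_{k+1}^{(l)})^{T}$ for every $l$; hence $V_{k+1}^{(l)}(W_{k+1}^{(l)})^{T} \to V_{k+1}W_{k+1}^{T}$ if and only if the matrix sequence $\{X_{k+1}^{(l)}\}$ produced by \eqref{eq18} converges to $X_{k+1}$, so it suffices to prove the latter.

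Next I would vectorize \eqref{eq18}: applying the straightening operator and Lemma \ref{lemma1}(d) gives
\begin{equation*}
x_{k+1}^{(l+1)} = T_k(\alpha_k,\omega_k)\,x_{k+1}^{(l)} + G_k(\alpha_k,\omega_k)\,q_k,
\end{equation*}
with $x_{k+1}^{(l)} = \mathrm{vec}(X_{k+1}^{(l)})$, $q_k = \mathrm{vec}(Q_k)$, and, writing $N_k = I\otimes A_k^{T} + A_k^{T}\otimes I$,
\begin{equation*}
T_k(\alpha_k,\omega_k) = (\alpha_k I + A_k^{T}\otimes I)^{-1}(\alpha_k I + I\otimes A_k^{T})^{-1}\bigl[\alpha_k^{2} I + (I\otimes A_k^{T})(A_k^{T}\otimes I) - (1-\omega_k)\alpha_k N_k\bigr],
\end{equation*}
which is exactly the matrix in the proof of Theorem \ref{theorem2} under $F\mapsto A_k$. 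Here $\alpha_k I + A_k^{T}$ and $\alpha_k I + A_k$ are nonsingular because $\alpha_k>0$ and, by the Kleinman-Newton setup (starting from a stabilizing $K_0$ and invoking the monotone convergence of Theorem \ref{theorem5}), $A - BK_k^{T}$ stays stable, i.e.\ every eigenvalue of $A_k = BK_k^{T} - A$ has positive real part, so $-\alpha_k\notin\Lambda(A_k)$.

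Then I would repeat the splitting identity $2\alpha_k N_k = -(\alpha_k I - I\otimes A_k^{T})(\alpha_k I - A_k^{T}\otimes I) + (\alpha_k I + I\otimes A_k^{T})(\alpha_k I + A_k^{T}\otimes I)$ to get $T_k(\alpha_k,\omega_k) = \tfrac12[(2-\omega_k)T_k(\alpha_k) + \omega_k I]$, whence $\rho(T_k(\alpha_k,\omega_k)) \le \tfrac12[(2-\omega_k)\rho(T_k(\alpha_k)) + \omega_k]$. Using the similarity of $T_k(\alpha_k)$ to $\widetilde T_k(\alpha_k)$ through $\alpha_k I + A_k^{T}\otimes I$, Lemma \ref{lemma2}, submultiplicativity of $\|\cdot\|_2$, and the two Cayley-type estimates $\|F_L\|_2<1$, $\|F_R\|_2<1$ — valid because every eigenvalue of $A_k$ has positive real part and $\alpha_k>0$ — I would conclude $\rho(T_k(\alpha_k))<1$ and hence $\rho(T_k(\alpha_k,\omega_k))<1$ for $\alpha_k>0$, $0\le\omega_k<2$. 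Since the iteration is then a contraction in spectral radius, its unique fixed point is $\mathrm{vec}(X_{k+1})$ (the vectorization of the unique solution of \eqref{eq17}), independent of the starting value $X_{k+1}^{(0)}=X_k$; therefore $x_{k+1}^{(l)}\to\mathrm{vec}(X_{k+1})$, i.e.\ $X_{k+1}^{(l)}\to X_{k+1}$, and by the first step the low-rank factors converge as claimed.

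The only genuinely new point — and the one I would be most careful about — is verifying that the hypothesis needed to run the Theorem \ref{theorem2} argument for the inner equation at step $k$ actually holds, namely that $\Lambda(A_k)$ lies strictly in the right half-plane (equivalently, $A-BK_k^{T}$ is stable). This is not an extra assumption but a structural feature of the Kleinman-Newton iteration; once it is in place, the remainder is a line-by-line transcription of the proof of Theorem \ref{theorem2}. A secondary point worth stating explicitly is that the running index here is the inner index $l$ with the parameters $\alpha_k,\omega_k$ frozen, so no uniformity with respect to the outer Newton index $k$ is required.
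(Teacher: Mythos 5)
Your proposal is correct and follows essentially the same route as the paper: reduce the statement about the low-rank factors to convergence of the full iteration \eqref{eq18}, vectorize via the straightening operator, and establish $\rho(T_k(\alpha_k,\omega_k))<1$ by transcribing the proof of Theorem \ref{theorem2} with $F$ replaced by $A_k$. The only difference is that the paper stops after vectorizing and defers the spectral-radius argument to an external reference, whereas you carry it out explicitly and also flag the one hypothesis (stability of $A_k$) that makes the transcription legitimate.
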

\begin{proof}
Here, we only need to prove the convergence of the GADI iteration format in equation \eqref{eq18}. Applying the flattening operator to equation \eqref{eq18} yields
\begin{equation}\label{eq20}
\left\{
\begin{array}{ll}
(\alpha_{k}I_{n^{2}}+I\otimes A_{k}^{T})\text{vec}(X_{k+1}^{(l+\frac{1}{2})})=(\alpha_{k}I_{n^{2}}-A_{k}^{T}\otimes I)\text{vec}(X_{k+1}^{(l)})+\text{vec}(Q_{k}),\\
(\alpha_{k}I_{n^{2}}+A_{k}^{T}\otimes I)\text{vec}(X_{k+1}^{(l+1)})=(A_{k+1}^{T}\otimes I-(1-\omega_{k})\alpha_{k}I_{n^{2}})\text{vec}(X_{k+1}^{(l)})+(2-\omega_{k})\alpha_{k}\text{vec}(X_{k+1}^{(l+\frac{1}{2})}),
\end{array}
\right.
\end{equation}
where $x=\text{vec}(X),\ \ q_{k}=\text{vec}(Q_{k})$, then the equivalent iteration format is obtained as follows
\begin{equation}\label{eq21}
\left\{
\begin{array}{ll}
(\alpha_{k}I_{n^{2}}+I\otimes A_{k}^{T})x_{k+1}^{(l+\frac{1}{2})}=(\alpha_{k}I_{n^{2}}-A_{k}^{T}\otimes I)x_{k+1}^{(l)}+q_{k},\\
(\alpha_{k}I_{n^{2}}+A_{k}^{T}\otimes I)x_{k+1}^{(l+1)}=(A_{k}^{T}\otimes I-(1-\omega_{k})\alpha_{k}I_{n^{2}})x_{k+1}^{(l)}+(2-\omega_{k})\alpha_{k}x_{k+1}^{(l+\frac{1}{2})}.
\end{array}
\right.
\end{equation}

Next, the proof of this theorem can be referenced to the proof method of Theorem 2.6 in \cite{ref23}.
\end{proof}
\subsection{Complexity analysis}
Here, we adopt the Kleinman-Newton iteration to transform the Riccati equation \label{eq05} into the Lyapunov equation \eqref{eq17} and then use the low-rank GADI method for iterative solving. Each iteration in the algorithm requires solving a Lyapunov equation, resulting in significant computational complexity. The Kleinman-Newton iteration method is an improved version of the Newton iteration method, as the right-hand side of equation \eqref{eq14}  is usually indefinite with a full rank matrix, while the method employed in this paper relies heavily on the low-rank structure of the right-hand side. The rank of the matrix selected in equation \label{eq16} is at most $m+p$, and we use the low-rank method to compute the approximate solution, making the Kleinman-Newton iteration method more suitable. We use the product of two low-rank matrices $V_{k}^{(l)}(W_{k}^{(l)})^{T}$ to replace $X_{k}^{(l)}$. Next, we provide the stopping criterion used in the algorithm.

Firstly, we define the residual matrix of the Riccati equation \label{eq05} as follows: $$F(X_{k})=A^{T}X_{k}+X_{k}A-X_{k}GX_{k}+Q.$$During the iteration process, in order to reflect its relative approximation level, the relative residual$$\text{Res}(X_{k})=\frac{\|A^{T}X_{k}+X_{k}A-X_{k}GX_{k}+Q\|_{2}}{\|Q\|_{2}}$$ is commonly used as a stopping criterion for iteration. However, in Newton iterations, forming the residual matrix  $R(V_{k}W_{k}^{T})$ requires a significant amount of memory. If the number of columns in $V_{k}$ is much smaller than the number of rows, the relative residual can be effectively used as the stopping criterion.

Additionally, we can observe the variation of the feedback matrix $K_{k+1}$. We use the criterion$$R_{s}=\frac{\|K_{k+1}-K_{k}\|_{2}}{\|K_{k+1}\|_{2}}<\varepsilon,$$
where $K_{k+1}=V_{k+1}W_{k+1}^{T}B$ and $\varepsilon$ is a very small positive number. This criterion is computationally efficient since $K\in\mathbb R^{n\times m}$ and $m\ll n$.

The direct iteration method has a memory requirement of $O(n^{2})$, and a computational complexity of $O(n^{3})$.  First, the computational cost of $X_{k+1}^{l+\frac{1}{2}}$ is $4n^{2}(2n-1)+3n^{2}$, and the cost of $X_{k+1}^{l+1}$ is $4n^{2}(2n-1)+4n^{2}$. Therefore, the total computational cost is $16n^{3}-n^{2}$. By comparing Algorithm \ref{algorithm2} and Algorithm \ref{algorithm3}, we can see that they have smaller memory requirements. If $A$ is a sparse matrix, the memory requirement can reach $O(n)$. Next, we calculate their computational complexity. Since $A\in\mathbb R^{n\times n}, \ B\in\mathbb R^{n\times m}$ and $C\in\mathbb R^{p\times n}$, during the iteration process, the column numbers of matrices $V_{k}$ and $W_{k}$ ncrease with the number of iterations. Here, we have $A_{k}\in\mathbb R^{n\times n},\ M_{k}\in\mathbb R^{n\times(m+p)}$. The computational cost of calculating $V_{1}^{(1)}$ is $2n^{2}p$, where $W_{1}^{(1)}=V_{1}^{(1)}$. The cost of calculating $V_{k}^{(1)}$ is $2n^{2}(m+p)$, and the cost of $V_{k}^{(l)}$ is $n^{2}(2^{l}-2)(m+p)$, while the cost of $W_{k}^{(l)}$ is $4n^{3}+2n^{2}(2^{l}-2)(m+p)$. Therefore, the total computational cost is $4n^{3}+n^{2}(3\cdot2^{l}(m+p)-2(2m+p))$.

\subsection{Numerical experiments}\

\textbf{Example 3.4.1\ \ }\ We take the coefficient matrix of the Riccati equation \eqref{eq5} as$$A=
\left(
\begin{array}{ccccc}
    -12   &  -3   &  0   &  \cdots   &  0\\
     2 &  -12   &  -3  &   \cdots   &  0\\
     \vdots   &  \ddots  &  \ddots  &   \ddots   &  \vdots\\
     0   &  \cdots   &  2  &   -12   &  -3\\
     0   & \cdots   &  0  &   2  &   -12\\
  \end{array}
\right)_{n\times n},\ \ B=\left(
                         \begin{array}{c}
                           0.2 \\
                           0.2 \\
                           \vdots \\
                           0.2 \\
                           0.2 \\
                         \end{array}
                       \right)_{n\times1},$$
                       $$ C=\left(
                     \begin{array}{ccccc}
                       0.1, & 0.1 ,& \cdots ,& 0.1 ,& 0.1 \\
                     \end{array}
                   \right)_{1\times n},$$
with $G=BB^{T},\ \ Q=C^{T}C$.

When the matrix dimension increases in multiples, we choose the relative residual as the iteration stopping criterion and use the Kleinman-Newton-GADI (K-N-GADI), Kleinman-Newton-RADI (K-N-RADI), and Kleinman-Newton-RGADI (K-N-RGADI) methods for computation. These iteration methods all start from the initial value $X_{0}=0$ and yield numerical results shown in Table \ref{table4}. From the table data, we can see that the K-N-RGADI method is more efficient in solving this example problem compared to the K-N-GADI and K-N-RADI methods. Additionally, Fig. \ref{figure5} clearly shows the variation of iteration steps and relative residual for these three methods when $n=1024$, while Fig. \ref{figure5} displays the time consumption of these three iteration methods as the matrix dimension increases. These findings further demonstrate the effectiveness of the K-N-RGADI method.
\begin{table}[!htbp]
 \centering
\begin{tabular}{ccccc}
   \hline
   $n$ &algorithm  & Res &out(int)IT &CPU\\
  \hline
   128& K-N-GADI & 4.2542e-15 & 8(8) & 0.36s  \\
    128& K-N-RADI  & 1.7696e-15 & 4(8) & 0.24s  \\
     128&K-N-RGADI  & 2.6821e-15 & 4(8)& 0.21s \\
    \hline
   256& K-N-GADI  & 6.9046e-15 & 8(8) & 1.38s  \\
    256& K-N-RADI & 8.5988e-15 & 4(8) & 0.76s  \\
     256&K-N-RGADI  & 5.0362e-15 & 4(8)& 0.69s \\
     \hline
   512& K-N-GADI  & 7.1477e-15 & 8(9) & 11.73s  \\
    512& K-N-RADI  & 1.0959e-14 & 6(9) & 7.13s  \\
     512&K-N-RGADI  & 8.9506e-15 & 6(9)& 6.85s \\
     \hline
   1024& K-N-GADI  & 9.2023e-15 & 8(13) & 172.32s  \\
    1024& K-N-RADI & 4.2303e-14 & 6(11) & 141.4s  \\
     1024&K-N-RGADI & 5.914e-15 & 6(12)& 134.96s \\
     \hline
   2048& K-N-GADI  & 7.4253e-12 &10(16) & 3412.4s  \\
    2048& K-N-RADI & 6.038e-13 & 8(16) & 2941.4s  \\
     2048&K-N-RGADI  & 2.1016e-13 & 8(16)& 2805s \\
   \hline
 \end{tabular}
 \caption{Numerical results for Example 3.4.1\label{table4}}
 \end{table}
  \begin{figure}[H]
\centering
    \begin{minipage}[t]{0.49\textwidth}
        \centering
        \includegraphics[width=1.1\textwidth]{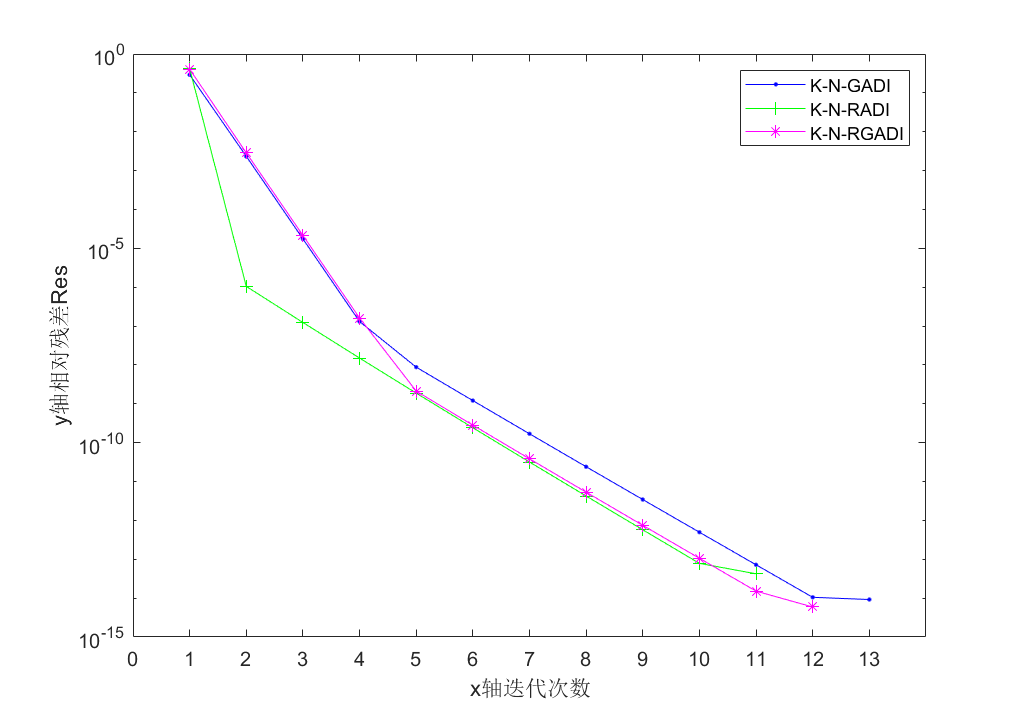}
        \caption{The residual curve of Example 3.4.1~$n=1024$\label{figure5}}
        \label{}
    \end{minipage}
    \begin{minipage}[t]{0.49\textwidth}
        \centering
        \includegraphics[width=1.25\textwidth]{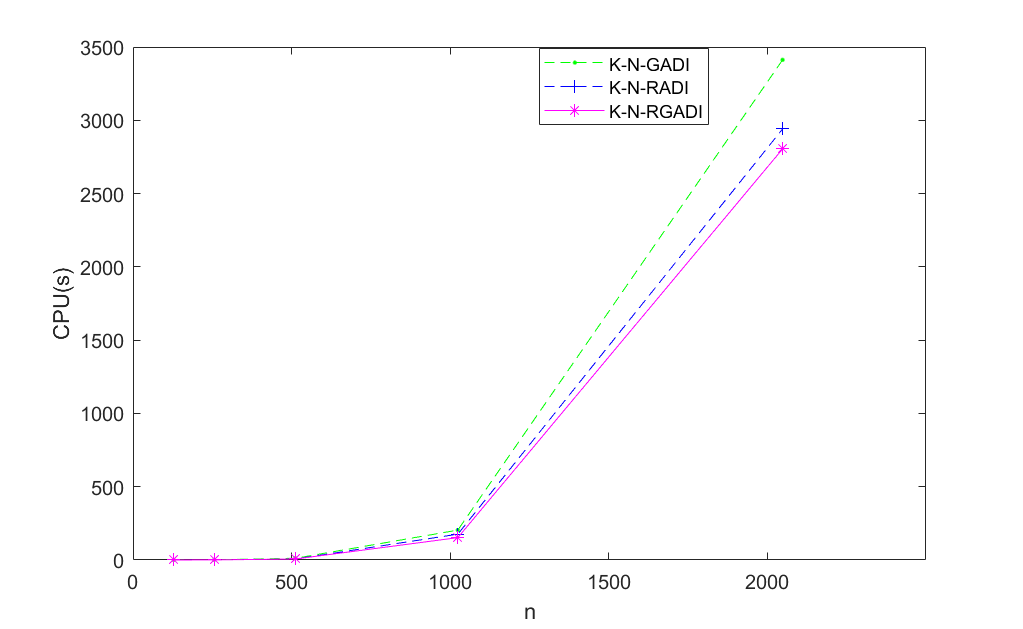}
        \caption{The time curve of Example 3.4.1\label{figure6}}
        \end{minipage}
\end{figure}

Furthermore, as the order $n$ of the coefficient matrix in the equation increases multiplicatively, we employ the K-N-RGADI method with the relative change of the feedback matrix $(R_{s})$ as the iteration stopping criterion. We compare this method with the K-N-GADI and K-N-RGADI methods that use the relative residual Res as the iteration stopping criterion. From Table \ref{table5}, we can observe that when using the relative change of the feedback matrix as the iteration stopping criterion, the K-N-RGADI method significantly reduces the running time.

\begin{table}[!htbp]
\begin{tabular}{|c|c|c|c|c|c|c|}
   \hline
   \multicolumn{7}{|c|}{K-N-GADI uses Res as the iteration stop standard}\\
    \hline
    $n$ & 128 & 256 & 512 &1024& 2048 &4096\\
   \hline
    CPU & 0.36s &1.38s &11.73s &172.42s& 3412.4s &-- \\
    \hline
    \multicolumn{7}{|c|}{K-N-RGADI uses Res as the iteration stop standard}\\
    \hline
    $n$ & 128 & 256 & 512 &1024& 2048 &4096\\
  \hline
    CPU & 0.21s &0.69s &6.85s &134.96s& 2805s &-- \\
     \hline
    \multicolumn{7}{|c|}{K-N-RGADI uses $R_{s}$ as the iteration stop standard}\\
    \hline
    $n$ & 128 & 256 & 512 &1024& 2048 &4096\\
   \hline
    CPU & 0.12s & 0.57s & 2.26s & 30.95s & 213.34s & 3478.8s\\
   \hline
 \end{tabular}
 \centering
 \caption{Numerical results for Example 3.4.1\label{table5}}
 \end{table}\

Fig. \ref{figure7} shows the iterative steps and corresponding residual $R_{s}$ obtained by using the K-N-RGADI method to compute different matrix dimensions $n$. It can be clearly seen that as the matrix dimension increases, the residual $R_{s}$ also increases.
\begin{figure}[H]
  \center
  \includegraphics[width=12cm,height=8cm] {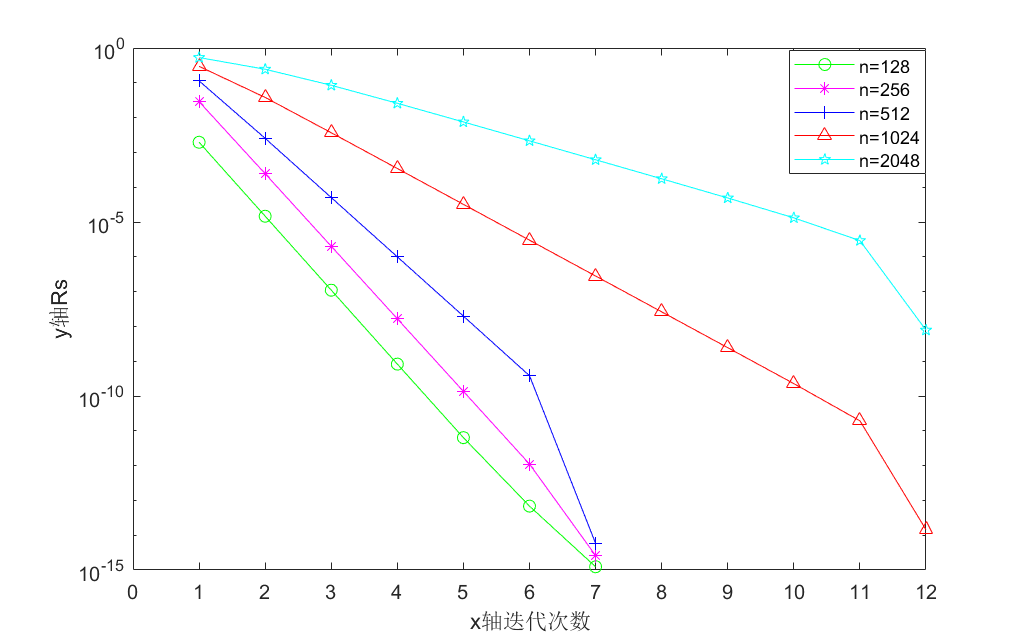}\\
  \caption{The residual curve of Example 3.4.1\label{figure7}}
\end{figure}

\textbf{Example 3.4.2\ \ }\ We take the coefficient matrix of the Riccati equation \eqref{eq5} as$$A=
\left(
\begin{array}{ccccccc}
    -12   &  -3   &  -2 & 0  & 0 &  \cdots   &  0\\
     2 &  -12   &  -3  &  -2 &  0 &\cdots   &  0\\
     1 &  2   &  -12  &  -3  & -2&  \cdots   &  0\\
     \vdots   &  \ddots  &  \ddots & \ddots & \ddots &   \ddots   &  \vdots\\
     0  &  \cdots  & 1  &  2  &   -12  & -3 &  -2\\
     0 &  \cdots &0 & 1  &  2  &   -12   &  -3\\
     0   & \cdots  &0 &  0   & 1 &  2  &   -12\\
  \end{array}
\right)_{n\times n},\ \ B=\left(
                         \begin{array}{c}
                           0.2 \\
                           0.2 \\
                           \vdots \\
                           0.2 \\
                           0.2 \\
                         \end{array}
                       \right)_{n\times1},$$
                       $$C=\left(
                     \begin{array}{ccccc}
                       0.1, & 0.1 ,& \cdots ,& 0.1 ,& 0.1 \\
                     \end{array}
                   \right)_{1\times n}.$$

We first used the relative residual Res as the iterative stopping criterion and adopted K-N-GADI, K-N-RADI, and K-N-RGADI methods to solve this example problem. The initial iteration value is set to $X_{0}=0$, and the numerical results obtained are shown in Table \ref{table6}.

\begin{table}[!htbp]
\begin{tabular}{ccccc}
   \hline
   $n$ &algorithm  & Res &out(int)IT &CPU\\
   \hline
   128& K-N-GADI & 2.2313e-15 & 8(10) & 0.46s  \\
    128& K-N-RADI  & 2.2274e-15 & 4(10) & 0.35s  \\
     128&K-N-RGADI  & 3.4297e-15 & 4(10)& 0.32s \\
     \hline
   256& K-N-GADI  & 4.8395e-15 & 8(10) & 1.69s  \\
    256& K-N-RADI & 9.4654e-15 & 4(10) & 1.06s  \\
    256 &K-N-RGADI & 6.6721e-15 & 4(10)& 1.04s \\
    \hline
   512& K-N-GADI  & 1.3522e-14 & 8(9) & 13.68s  \\
    512& K-N-RADI & 1.3458e-14 & 6(10) & 10.56s  \\
    512 &K-N-RGADI & 1.22e-14 & 6(10)& 8.29s \\
     \hline
   1024& K-N-GADI  & 7.2204e-14 & 8(13) & 203.93s  \\
    1024& K-N-RADI & 2.9667e-14 & 6(12) & 176.04s  \\
     1024&K-N-RGADI& 2.0719e-14 & 6(12)& 152.61s \\
     \hline
   2048& K-N-GADI  & 1.2224e-12 &10(16) & 4019.5s  \\
    2048& K-N-RADI  & 2.5904e-13 & 8(16) &  3145.8s  \\
    2048 &K-N-RGADI  & 3.2006e-13 & 8(16)& 2988.2s \\
   \hline
 \end{tabular}
 \centering
 \caption{Numerical results for Example 3.4.2\label{table6}}
 \end{table}

Next, we utilize the relative change of the feedback matrix $(R_{s})$ as the stopping criterion for the K-N-RGADI iteration method and compare its runtime with that of the K-N-GADI and K-N-RGADI iteration methods using the relative residual Res as the stopping criterion. As shown in Table \ref{table7}, when using the relative change of the feedback matrix as the iteration stopping criterion, the K-N-RGADI method achieves a shorter runtime.

\begin{table}[!htbp]
\begin{tabular}{|c|c|c|c|c|c|c|}
   \hline
   \multicolumn{7}{|c|}{K-N-GADI uses Res as the iteration stop standard}\\
    \hline
    $n$ & 128 & 256 & 512 &1024& 2048 &4096\\
   \hline
    CPU & 0.46s &1.69s &13.68s &203.93s& 4019.5s &-- \\
    \hline
    \multicolumn{7}{|c|}{K-N-RGADI uses Res as the iteration stop standard}\\
    \hline
    $n$ & 128 & 256 & 512 &1024& 2048 &4096\\
   \hline
    CPU & 0.32s &1.04s &8.29s &152.61s& 2988.2s &-- \\
     \hline
    \multicolumn{7}{|c|}{K-N-RGADI uses $R_{s}$ as the iteration stop standard}\\
   \hline
    $n$ & 128 & 256 & 512 &1024& 2048 &4096\\
   \hline
    CPU & 0.14s & 0.63s & 2.58s & 35.26s & 218.98s &3519.6.5s\\
   \hline
 \end{tabular}
 \caption{Numerical results for Example 3.4.2\label{table7}}
 \centering
 \end{table}

\section{Conclusions}
\label{sec:conclusion}
This paper presents a low-rank GADI algorithm for computing low-rank approximate solutions to large-scale Lyapunov and algebraic Riccati equations. In the computation of low-rank approximate solutions to the algebraic Riccati equation, we combine the Kleinman-Newton method and utilize the low-rank GADI algorithm to solve the Lyapunov equation at each Newton step, resulting in the Kleinman-Newton-RGADI algorithm. Additionally, we observe that the low-rank GADI method exhibits the same convergence properties as the GADI method when solving both the Lyapunov and algebraic Riccati equations with low-rank approximations. Furthermore, numerical examples are provided to compare the effectiveness of the low-rank ADI algorithm and the low-rank GADI algorithm. The results demonstrate that the low-rank GADI method is more efficient. However, like other solvers, the performance of this algorithm heavily relies on the choice of shift parameters, which remains a challenging problem.





\end{document}